\newcommand{\sech}{{\rm \,sech}}
\newcommand{\R}{{\mathbb R}}
\newcommand{\Lum}{{\mathcal{L}_{Re}}}
\newcommand{\Ldois}{{\mathcal{L}_{Im}}}
\newcommand{\ve}{{\varepsilon}}
\newcommand{\Ll}{\mathbb{L}}
\newcommand{\Hh}{\mathbb{H}}
\numberwithin{equation}{section}
\newtheorem{theorem}{Theorem}[section]
\newtheorem{proposition}[theorem]{Proposition}
\newtheorem{remark}[theorem]{Remark}
\newtheorem{definition}[theorem]{Definition}
\begin{document}
\vglue-1cm \hskip1cm
\title[Standing waves for the Schr\"odinger-Kirchhoff equation]{Existence and orbital stability of standing waves for the 1D Schr\"odinger-Kirchhoff equation.}

\keywords{Schr\"odinger-Kirchhoff equation, solitary waves, periodic waves, orbital stability}

\maketitle

\begin{center}
{\bf F\'abio Natali}

{\scriptsize    Department of Mathematics, State University of
	Maring\'a, Maring\'a, PR, Brazil. }\\
{\scriptsize fmanatali@uem.br }

\vspace{3mm}

{\bf Eleomar Cardoso Jr.}

{\scriptsize   Federal University of Santa Catarina, Blumenau, SC, Brazil.}\\
{\scriptsize eleomar.junior@ufsc.br}
\end{center}

\begin{abstract}
In this paper we establish the orbital stability of standing wave solutions associated to the one-dimensional Schr\"odinger-Kirchhoff equation. The presence of a mixed term gives us more dispersion, and consequently, a different scenario for the stability of solitary waves in contrast with the corresponding nonlinear Schr\"odinger equation. For periodic waves, we exhibit two explicit solutions and prove the orbital stability in the energy space.

\end{abstract}

\section{Introduction}

This paper addresses orbital stability results for the one-dimensional evolutionary Schr\"odinger-Kirchhoff equation
\begin{equation}\label{nls}
iu_t+\left(1+\int_{\mathbb{B}}|u_x|^2dx\right)u_{xx}+|u|^{2r}u=0,
\end{equation}
where $r\geq1$ and $u=u(x,t)$ is a complex-valued function defined over the set $\mathbb{B}\times \R^{+}$. Here, $\mathbb{B}=\mathbb{R}$ or $\mathbb{B}=\mathbb{T}$ and in the second case, we restrict ourselves to periodic solutions with period $2\pi$ by convenience. Equation $(\ref{nls})$ arises in
quantum mechanics and describes the dynamics of the particle in a non-relativistic setting (see \cite{anto}). The additional term $\int_{\mathbb{B}}|u_x|^2dx$ represents a magnetic potential.\\
\indent From the mathematical point of view, the mixed dispersive term in $(\ref{nls})$ provides us a lack of scaling invariance which is a very important property to determine, for instance, a threshold value for obtaining global well-posedness results in the energy space.\\
\indent  Another important mathematical aspect concerning equation $(\ref{nls})$ is the existence of standing waves. They are finite-energy waveguide solutions of \eqref{nls} having the form
\begin{equation}\label{standing}
u(x,t)=e^{i\omega t}\phi(x),
\end{equation}
where $\omega$ is a positive constant representing the frequency of the wave and $\phi:\mathbb{B}\to\R$ is a smooth function  satisfying: $\phi^{(n)}(x)\rightarrow 0$ for all $n\in\mathbb{N}$, as $|x|\to+\infty$, when $\mathbb{B}=\R$ or  $\phi^{(n)}(0)=\phi^{(n)}(2\pi)$ for all $n\in\mathbb{N}$, when $\mathbb{B}=\mathbb{T}$.

Here, we consider smooth curves of standing waves $\phi$ depending on $\omega$. To do so, we substitute \eqref{standing} into \eqref{nls} to obtain the following nonlinear ODE
\begin{equation}\label{soleq}
-\left(1+\int_{\mathbb{B}}\phi'^2dx\right)\phi''+\omega \phi-\phi^{2r+1}=0.
\end{equation}
\indent First, we consider the case $\mathbb{B}=\mathbb{R}$. Solitary waves with hyperbolic secant profile of the form
\begin{equation}\label{solitary}
\phi(x)=a\sech^{\frac{1}{r}}(bx),
\end{equation} satisfies equation $(\ref{soleq})$. Here, $a$ and $b$ are smooth parameters depending on $\omega$ to be determined later on.\\
\indent When $\mathbb{B}=\mathbb{T}$, positive and periodic standing waves associated to the equation $(\ref{soleq})$ for some particular cases of $r$ are well known. Indeed, if one considers the case $r=1$, we have
\begin{equation}\label{per1}
\phi(x)=a{\rm dn}(bx,k).
\end{equation}
Now, for the case $r=2$, a positive and periodic wave can be expressed by
\begin{equation}\label{per2}
\phi(x)=\frac{a{\rm dn}(bx,k)}{\sqrt{1-\alpha {\rm sn}^2(bx,k)}},
\end{equation}
where ${\rm dn}$ and ${\rm sn}$ are the Jacobi Elliptic Functions called \textit{dnoidal} and $\textit{snoidal}$, respectively. The value $k\in(0,1)$ is called modulus of the elliptic function and parameters $a$, $b$, $\alpha$, and $\omega$ depend smoothly on the modulus $k$.

\indent Our main interest in this paper is to show that the standing
wave $\phi$ with profiles  \eqref{solitary}, $(\ref{per1})$ or $(\ref{per2})$ are
orbitally stable in the complex energy space $\Hh^1$ (see the definition of $\Hh^n$, $n\in\mathbb{N}$ ahead). As far we can see, the standing wave $\phi$
is \textit{orbitally stable}, if the profile of an initial data
$u_0$ for (\ref{nls}) is close to $\phi$, then the associated
evolution in time $u(t)$, with $u(0)=u_0$, remains close to $\phi$,  up to symmetries, for
all values of $t\geq 0$ (see Definition \ref{stadef} for the precise definition).\\
\indent The strategy to prove the orbital stability/instability of standing waves is based on the developments contained in \cite{gss1}, \cite{gss2} and \cite{np} where the authors have been established sufficient conditions for obtaining the orbital stability/instability of standing waves for abstract Hamiltonian systems of the form
\begin{equation}\label{stha}
u_t(t)=JE'(u(t))
\end{equation}
posed on a Hilbert space $X$, where $J$ is an invertible bounded operator in $X$ and $E'$ represents the Fr\'echet derivative of a energy functional $E$. In our context, $J=\left(\begin{array}{cccc}
0 & -1\\
1 & 0
\end{array}\right)$ and $E$ is given by
\begin{equation}\label{E}
E(u)=\frac{1}{2}\int_{\mathbb{B}}|u_x|^2dx+\frac{1}{2}\left(\int_{\mathbb{B}}|u_x|^2dx\right)^2-\frac{1}{2r+2}\int_{\mathbb{B}}|u|^{2r+2}dx.
\end{equation}

One of the most crucial
assumptions in such theories are that the underlying standing wave
belongs to a smooth curve of standing waves, $\omega\in
I\mapsto\phi_{\omega}$, depending on the phase
parameter $\omega$. The existence of a smooth curve is very useful to calculate the precise sign of
\begin{equation}\label{signd}\frac{d}{d\omega}\int_{\mathbb{B}}\phi^2dx,
\end{equation}
which plays an important role in the orbital stability/instability analysis. \\
\indent Another important fact concerns some results of spectral analysis for the linear operator arising in the linearization of the equation around the standing wave. In our context, such an operator turns out to be a matrix operator containing Schr\"odinger-type operators in the principal diagonal. In our paper, we need to use some tools contained in the classical Sturm-Liouville ($\mathbb{B}=\R$) and Floquet ($\mathbb{B}=\mathbb{T}$) theories.\\
\indent We can summarize our stability results by stating the following Theorem:
\begin{theorem}\label{maintheo}
(i) For the case $\mathbb{B}=\mathbb{R}$, the smooth curve of standing wave solutions associated to the equation $(\ref{soleq})$ is orbitally stable in $\Hh^1$ provided that $r=1,2$. If $r=4$ the solitary wave $\phi$ is orbitally unstable in $\Hh_e^1$.\\
(ii) For the case  $\mathbb{B}=\mathbb{T}$, the smooth curve of standing wave solutions associated to the equation $(\ref{soleq})$ is orbitally stable in $\Hh^1$ if $r=1,2$.
\end{theorem}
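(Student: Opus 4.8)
The plan is to verify the Grillakis–Shatah–Strauss (GSS) hypotheses for the Hamiltonian formulation \eqref{stha} with the energy $E$ in \eqref{E} and charge $Q(u)=\tfrac12\int_{\mathbb B}|u|^2\,dx$, so that the standing wave $\phi_\omega$ is a critical point of $E+\omega Q$. First I would record the well-posedness and conservation laws for \eqref{nls} in $\Hh^1$ (the local theory in the energy space plus conservation of $E$, $Q$, and, on $\R$, momentum), which together with the a priori bound coming from $E$ and $Q$ should give global existence in the regimes considered; this is where the extra Kirchhoff term $\left(\int|u_x|^2\right)^2$ actually helps, since it strengthens the control of the $H^1$-norm compared with the pure NLS. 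Next I would establish the existence of the smooth curve $\omega\mapsto\phi_\omega$: for $\mathbb B=\R$ one plugs \eqref{solitary} into \eqref{soleq}, solves the two algebraic relations determining $a(\omega)$ and $b(\omega)$, and checks smoothness in $\omega$ on the relevant interval; for $\mathbb B=\mathbb T$ one does the analogous computation with \eqref{per1}, \eqref{per2}, obtaining $a,b,\alpha$ (and the constraint fixing the period to $2\pi$) as smooth functions of the modulus $k$, and then inverting to view everything as smooth in $\omega$.

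The core of the argument is the spectral analysis of the linearized operator $\mathcal L=E''(\phi_\omega)+\omega Q''(\phi_\omega)$. Writing $u=\phi+v$ with $v=v_1+iv_2$ and linearizing, $\mathcal L$ becomes block-diagonal, $\mathcal L=\mathrm{diag}(\Lum,\Ldois)$, where $\Ldois = -\left(1+\int\phi'^2\right)\partial_x^2+\omega-\phi^{2r}$ and $\Lum$ is a Schrödinger-type operator with the extra nonlocal rank-one perturbation $-2\left(\int\phi'\,\cdot'\,dx\right)\phi''$ coming from differentiating the Kirchhoff coefficient. I would handle $\Ldois$ first: it annihilates $\phi$ (this is \eqref{soleq}), $\phi>0$, so $0$ is its lowest eigenvalue and $\Ldois\ge0$ with trivial kernel modulo the phase direction — on $\R$ by Sturm–Liouville theory, on $\mathbb T$ by Floquet theory applied to the explicit dnoidal-type potentials, for which the first few periodic eigenvalues are classically known. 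For $\Lum$, the local part $-\left(1+\int\phi'^2\right)\partial_x^2+\omega-(2r+1)\phi^{2r}$ has, by the same Sturm–Liouville/Floquet input, exactly one negative eigenvalue and kernel spanned by $\phi'$ (the translation mode); the nonlocal rank-one term must then be shown not to destroy $n(\mathcal L)=1$, which one controls because it is a nonnegative (or sign-definite) perturbation in the right direction — this is the place where a short quadratic-form computation using $\int\phi'\phi''\,dx=0$ is needed. The upshot is $n(\mathcal L)=1$ and $\ker\mathcal L=\mathrm{span}\{i\phi,\phi'\}$ (on $\R$; just $\{i\phi\}$ on $\mathbb T$ if translations are not a symmetry in the chosen class, or both if they are), exactly as GSS requires.

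With the spectral picture in hand, the stability/instability dichotomy reduces to the sign of $d(\omega)=(E+\omega Q)(\phi_\omega)$, equivalently to the sign of $d''(\omega)=\frac{d}{d\omega}\,Q(\phi_\omega)=\tfrac12\frac{d}{d\omega}\int_{\mathbb B}\phi_\omega^2\,dx$ in \eqref{signd}. I would compute this derivative explicitly from the formulas for $\phi_\omega$: on $\R$, $\int_\R a^2\sech^{2/r}(bx)\,dx = (a^2/b)B(1/r,\tfrac12)$-type expression, differentiate in $\omega$ using the known $a(\omega),b(\omega)$, and read off that $d''(\omega)>0$ precisely for $r=1,2$ while $d''(\omega)<0$ for $r=4$ (here the Kirchhoff term shifts the classical NLS threshold $r=2$, which is the qualitatively new feature the abstract says to expect); on $\mathbb T$, one differentiates the elliptic-integral expressions for $\int_0^{2\pi}\phi_\omega^2$ in $k$, using the chain rule through $\omega(k)$, and checks positivity for $r=1,2$. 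Then GSS (in the solitary-wave case, \cite{gss1,gss2}) and its periodic analogue (\cite{np}) give: $d''(\omega)>0$ together with $n(\mathcal L)=1$ yields orbital stability in $\Hh^1$, proving parts (i) for $r=1,2$ and (ii); $d''(\omega)<0$ with $n(\mathcal L)=1$ yields instability, proving the $r=4$ statement in $\Hh^1_e$. I expect the main obstacle to be the $\Lum$ analysis: controlling the nonlocal rank-one Kirchhoff perturbation so that the Morse index stays equal to one (rather than jumping to two), and, on the torus, pinning down the Floquet spectrum of the explicit potentials in \eqref{per1}–\eqref{per2} precisely enough to conclude the kernel is one-dimensional.
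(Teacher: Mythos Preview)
Your plan matches the paper's proof essentially step for step: GSS framework, the block decomposition $\mathcal L=\mathrm{diag}(\Lum,\Ldois)$, the quadratic-form identity $(\Lum P,P)_{L^2}=(\mathcal L_1 P,P)_{L^2}+2(\phi',P')_{L^2}^2$ to compare $\Lum$ with its local part $\mathcal L_1$, Sturm--Liouville/Floquet input for $\mathcal L_1$ and $\Ldois$, and then the sign of $\frac{d}{d\omega}\int_{\mathbb B}\phi_\omega^2\,dx$.

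One point to tighten: the nonnegativity of the rank-one Kirchhoff term only gives $n(\Lum)\le n(\mathcal L_1)=1$; it does \emph{not} by itself rule out $n(\Lum)=0$. The paper closes this by testing against $\phi$ directly, computing $(\Lum\phi,\phi)_{L^2}=-2r\int_{\mathbb B}\phi^{2r+2}\,dx+2\bigl(\int_{\mathbb B}\phi'^2\,dx\bigr)^2<0$; on $\mathbb T$ this inequality, the Floquet constant $\theta$ determining simplicity of $\ker\mathcal L_1$, and the sign of $d''(\omega)$ are all verified numerically/graphically rather than in closed form, so your ``checks positivity'' will in practice be a numerical check. Finally, the restriction to $\Hh_e^1$ for $r=4$ is there precisely so that the translation eigenfunction $(\phi',0)$ drops out of $\ker\mathcal L$, leaving a one-dimensional kernel matching the single remaining (phase) symmetry as the GSS instability criterion requires.
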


\indent At least one contribution concerning the orbital stability of ground states associated with the equation $(\ref{nls})$ can be found in the current literature. In the three-dimensional case and $r\in (0,2/3)$, the authors in \cite{zhang} showed that the solitary waves which minimize $E$ in $(\ref{E})$ subject to fixed mass are orbitally stable in $\Hh^1$. The proof relies tools contained in the abstract theory in \cite{we} and a suitable result of uniqueness for the associated solitary waves.\\
\indent Next we perform a comparison between our results and the orbital stability of solitary standing waves associated with the well known nonlinear Schr\"odinger equation
\begin{equation}\label{regnls}
iu_t+u_{xx}+|u|^{2r}u=0.
\end{equation}
We can find a considerable number of contributors concerning this topic (see for instance \cite{gss1}, \cite{gss2} and related works). In fact, first of all it is well known that a hyperbolic secant profile of the form $(\ref{solitary})$ and depending smoothly on $\omega>0$ also solves equation $(\ref{soleq})$. This fact and the presence of scaling invariance in equation $(\ref{regnls})$ give us
\begin{equation}\label{derL2}
\frac{d}{d\omega}\int_{\mathbb{R}}\phi^2dx=c_r(2-r)\int_{\mathbb{R}}\phi^2dx,
\end{equation}
where $c_r$ is a positive constant depending on $r$. The solitary wave $\phi$ is orbitally stable for $1<r<2$ and unstable in $\Hh_{e}^1$ for $r>2$. In the degenerate case $r=2$ the methods aforementioned can not be applied since $\phi$ is a saddle point of the problem in finding a minimum/maximum point of the associated energy with fixed momentum. In our case, the absence of the scaling invariance does not determine a good equality as in $(\ref{derL2})$ and the choice of the conditional exponent to get the global well-posedness in $\Hh^1$ (see Subsection 2.3), namely $r=4$, give us the orbital instability of the corresponding solitary wave in $\Hh_e^1$.\\
\indent Regarding the orbital stability of periodic waves, we have two important references. For the case $r=1$, it is well known that periodic waves with dnoidal type as in $(\ref{per1})$ are solutions of the equation $(\ref{regnls})$ and the orbital stability of these waves have been determined in \cite{angulo1}. The critical case $r=2$ was treated in \cite{AN2}, where the authors established orbital stability in $\Hh^1$ and orbital instability in $\Hh_e^1$ for the positive periodic wave in $(\ref{per2})$. Important to mention that in both cases, the authors have been used a combination of the main arguments in \cite{gss1}, \cite{gss2} and \cite{we} for the stability and \cite{gss1} for the instability. Our results give us that for both cases $r=1$ and $r=2$ the corresponding dnoidal waves are orbitally stable.

The paper is organized as follows. In Section \ref{not} we introduce some notations and additional results. Section 3 is devoted to the spectral theory associated to the linearized operator around the wave $(\phi,0)$. In Section 4 we show our stability result for the standing wave \eqref{standing}. 

\section{Notation and Preliminaries} \label{not}

\subsection{Notations.} We present some notation used throughout the paper. Given $n\in\mathbb{N}$, by $H^n:=H^n(\mathbb{B})$ we denote the usual Sobolev space of real-valued functions. In particular $H^0(\mathbb{B})\simeq L^2(\mathbb{B})$. The scalar product in $H^n$ will be denoted by $(\cdot,\cdot)_{H^n}$ and the norm induced by this inner product is indicated by $||\cdot||_{H^n}$. We set
$$
\Ll^2=L^2(\mathbb{B})\times L^2(\mathbb{B})\qquad
\mbox{and}\qquad
\Hh^n=H^n(\mathbb{B})\times H^n(\mathbb{B}).
$$
Such spaces are endowed with their usual norms and scalar products.  $\mathbb{H}_{e}^1$ indicates the space $\Hh^1$ constituted by even functions.

Besides $E$ in $(\ref{E})$, equation \eqref{nls} conserves the mass
\begin{equation}\label{F}
F(u)=\frac{1}{2}\int_{\mathbb{B}} |u|^2\,dx.
\end{equation}

Equation \eqref{nls} can also be viewed   as a real Hamiltonian system. In fact,
by writing $u=P+iQ$ and separating real and imaginary parts, we see that
\eqref{nls} is equivalent to the system
\begin{equation}\label{equisys}
\left\{\begin{array}{cccc}
\displaystyle P_t+\left(1+\int_{\mathbb{B}}P_x^2+Q_x^2dx\right)Q_{xx}+Q(P^2+Q^2)^{r}=0,\\
\displaystyle-Q_t+\left(1+\int_{\mathbb{B}}P_x^2+Q_x^2dx\right)P_{xx}+P(P^2+Q^2)^r=0.
\end{array}\right.
\end{equation}
Moreover, the quantities \eqref{E} and \eqref{F} become
\begin{equation}\label{hamiltocons1}
E(P,Q)=\frac{1}{2}\int_{\mathbb{B}} P_x^2+Q_x^2dx+\frac{1}{2}\bigg(\int_{\mathbb{B}}P_x^2+Q_x^2 dx\bigg)^2-\frac{1}{2r+2}\int_{\mathbb{B}}(P^2+Q^2)^{r+1}dx,
\end{equation}
\begin{equation}\label{mass1}
F(P,Q)=\frac{1}{2}\int_{\mathbb{B}} (P^2+Q^2)\,dx.
\end{equation}
Consequently, \eqref{equisys} or, equivalently, \eqref{nls} can be written as
\begin{equation}\label{hamiltonian}
\frac{d}{dt}U(t)=J E'(U(t)), \qquad U={P\choose Q},
\end{equation}
where $J$ is given by
$
J=\left(\begin{array}{cccc}
 0 & -1\\
1 & 0
\end{array}\right)
$
and it is easy to see that
$
J^{-1}=-J.
$

Equation \eqref{nls} is invariant under the unitary action of
rotation and translation, that is, if $u=u(x,t)$ is a solution of
\eqref{nls} so are $e^{-i\theta}u$ and $u(x-s,t)$, for any real numbers $\theta$ and $s$. Equivalently,
this means if $U=(P,Q)$ is a solution of \eqref{hamiltonian}, so are
\begin{equation}\label{l0}
T_1(\theta)U:=\left(\begin{array}{cccc}
 \cos\theta & \sin\theta\\
-\sin\theta & \cos\theta
\end{array}\right)
\left(\begin{array}{c}
 P\\
 Q
\end{array}\right)
\end{equation}
and
\begin{equation}\label{l00}
T_2(s)U:= \left(\begin{array}{c}
 P(\cdot-s,\cdot)\\
  Q(\cdot-s,\cdot)
\end{array}\right).
\end{equation}

The actions $T_1$ and $T_2$ define unitary groups in $\Hh^2$ with
infinitesimal generators given, respectively, by
\begin{equation*}
T_1'(0)U:=\left(\begin{array}{cccc}
 0 & 1\\
-1 & 0
\end{array}\right)
\left(\begin{array}{c}
 P\\
 Q
\end{array}\right)\equiv -J\left(\begin{array}{c}
 P\\
 Q
\end{array}\right)
\end{equation*}
and
\begin{equation*}
T_2'(0)U:=\partial_x \left(\begin{array}{c}
 P\\
  Q
\end{array}\right).
\end{equation*}

In this context, a standing wave solution having the form \eqref{standing} becomes a solution of \eqref{equisys} of the form
\begin{equation}\label{mstanding}
U(x,t)={\phi(x)\cos(\omega t)\choose \phi(x)\sin(\omega t)}.
\end{equation}
Thus, the  function $U$ in \eqref{mstanding},  with $\phi$ given by one of the solutions in $(\ref{solitary})$, $(\ref{per1})$ or $(\ref{per2})$ is a standing wave solution of \eqref{equisys}.

Next, from \eqref{soleq} we obtain that $(\phi,0)$ is a critical point of
the functional $E+\omega F$ in the sense that
\begin{equation}\label{crit}
E'(\phi,0)+\omega F'(\phi,0)=0.
\end{equation}

To simplify the notation, we set
\begin{equation}\label{Phi}
\Phi:=(\phi,0)
\end{equation}
and let us define
\begin{equation}\label{linear}
G:=E+\omega F.
\end{equation}
We see from $(\ref{crit})$ that $G'(\Phi)=0$.

The splitting $u=P+iQ$ enables us to introduce the diagonal linear operator given by
\begin{equation}     \label{L}
\mathcal{L}:= \left(\begin{array}{cccc}
\mathcal{L}_{Re} & 0\\\\
0 & \mathcal{L}_{Im}
\end{array}\right),
\end{equation}
where
\begin{equation}\label{L1}
\mathcal{L}_{Re}:= -\left(1+\int_{\mathbb{B}}\phi'^2dx\right)\partial_x^2-2(\phi',\partial_x\cdot)_{L^2}\phi''+\omega-(2r+1)\phi^{2r}
\end{equation}
and
\begin{equation}\label{L2}
\mathcal{L}_{Im}:= -\left(1+\int_{\mathbb{B}}\phi'^2dx\right)\partial_x^2+\omega-\phi^{2r}.
\end{equation}
This operator appears in the linearization of \eqref{equisys} around the wave $\Phi=(\phi,0)$. The knowledge of its spectrum is cornerstone in the analysis contained in Section 3.

\subsection{Existence of solitary and periodic standing waves}\label{reviewsec}
In this subsection, we show the existence of explicit standing wave solutions associated to the equation $(\ref{soleq})$. First, we prove the existence of solitary waves as in $(\ref{solitary})$.\\

\noindent {\bf Case 1.} $\mathbb{B}=\R$.\\  We prove that particular cases of $r$ the profile in $(\ref{solitary})$ is a solution for the equation $(\ref{soleq})$ and we establish that the values of $a$ and $b$ are given in terms of $\omega\in I$. Here, $I$ is an unbounded open interval contained in $\R$. After that, we deal with the general case by supposing additional conditions to get $a$ and $b$ depending on $\omega$ in open bounded intervals.\\

\noindent $\bullet$ For instance, if $r=1$ one has that

\begin{equation}\label{ar1}
a=\frac{1}{2b^2}\left(6b(\omega-b^2)\right)^{\frac{1}{2}}
\end{equation}
and $b$ is a complicated function in terms of $\omega$ given by

\begin{equation}\label{br1}
b=\frac{1}{4\omega}\left(\left(24\omega^3-1+4\sqrt{3}\sqrt{\frac{12\omega^3-1}{\omega}}
\omega^2\right)^{\frac{1}{3}}+\frac{1}{\left(24\omega^3-1+4\sqrt{3}\sqrt{\frac{12\omega^3-1}{\omega}}\omega^2\right)^{\frac{1}{3}}}-1\right).
\end{equation}
\indent We see from $(\ref{br1})$ that $\omega>\frac{1}{\sqrt[3]{12}}\approx0.43679$ and we can define $b$ and $a$ in terms of $\omega$ with $0.57235\approx\frac{1}{4}\sqrt[3]{12}<b<b_0\approx 0.9$ and $a>\frac{1}{24}\sqrt{96}\sqrt[3]{12}\approx 0.93467$. Moreover, $a$ and $b$ are strictly increasing functions in terms of $\omega$ according to the following pictures:
\newpage
\begin{figure}[!htb]
	\centering
	\begin{minipage}[b]{0.4\linewidth}
		\includegraphics[scale=0.35]{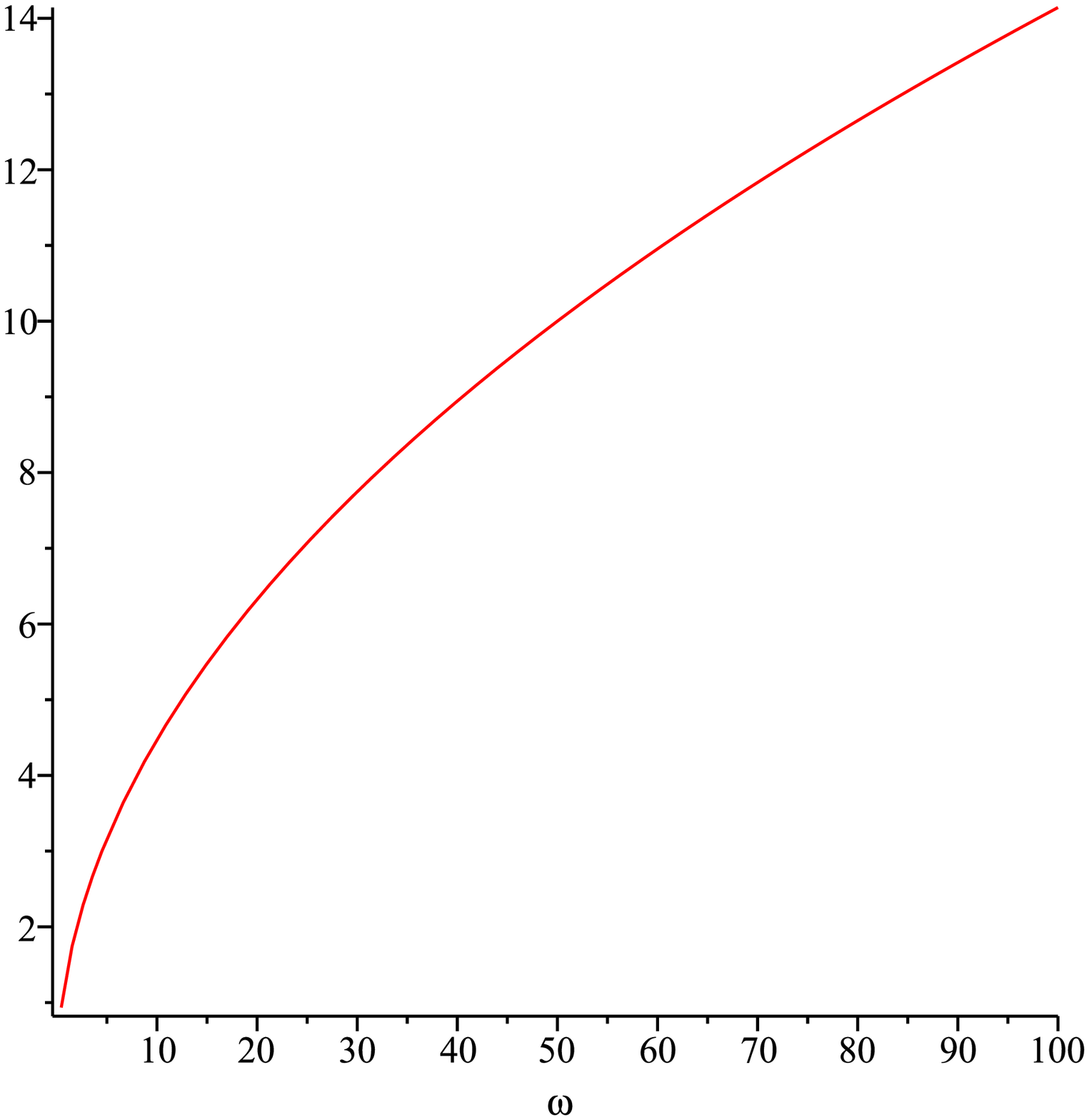}
		\caption{Graphic of $a$.}
	\end{minipage}\hfill
	\begin{minipage}[b]{0.4\linewidth}
		\includegraphics[scale=0.35]{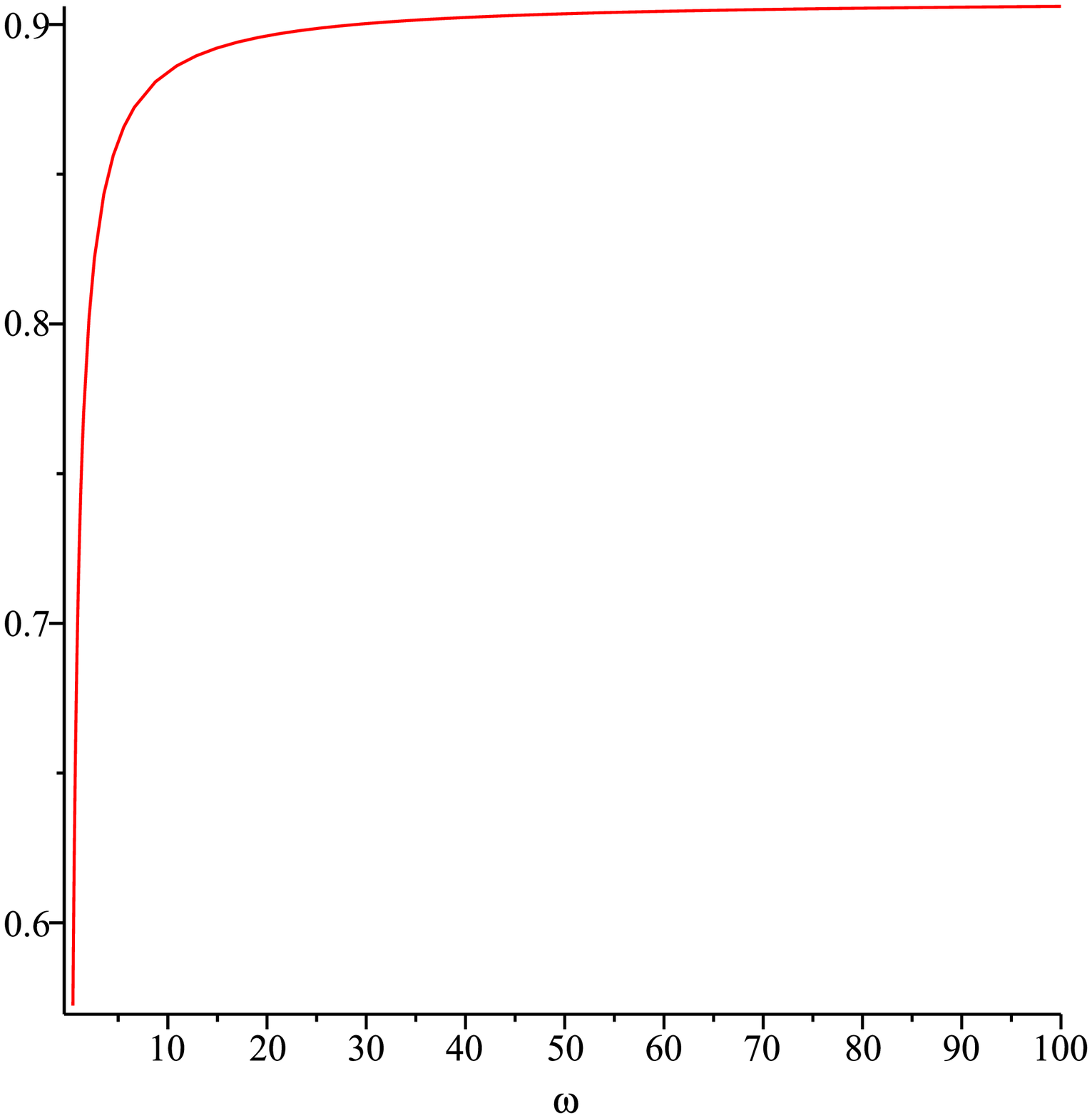}
		\caption{Graphic of $b$.}
	\end{minipage}
\end{figure}

\noindent $\bullet$ Let us consider the case $r=2$. For $\omega>\frac{8}{9\pi}\approx 0.28294$, we see that $a$ and $b$ can also be explicitly determined but we omit their expressions to simplify the notations. As above, $a$ and $b$ are strictly increasing functions in terms of $\omega$ according to the following pictures:

\begin{figure}[!htb]
	\centering
	\begin{minipage}[b]{0.4\linewidth}
		\includegraphics[scale=0.35]{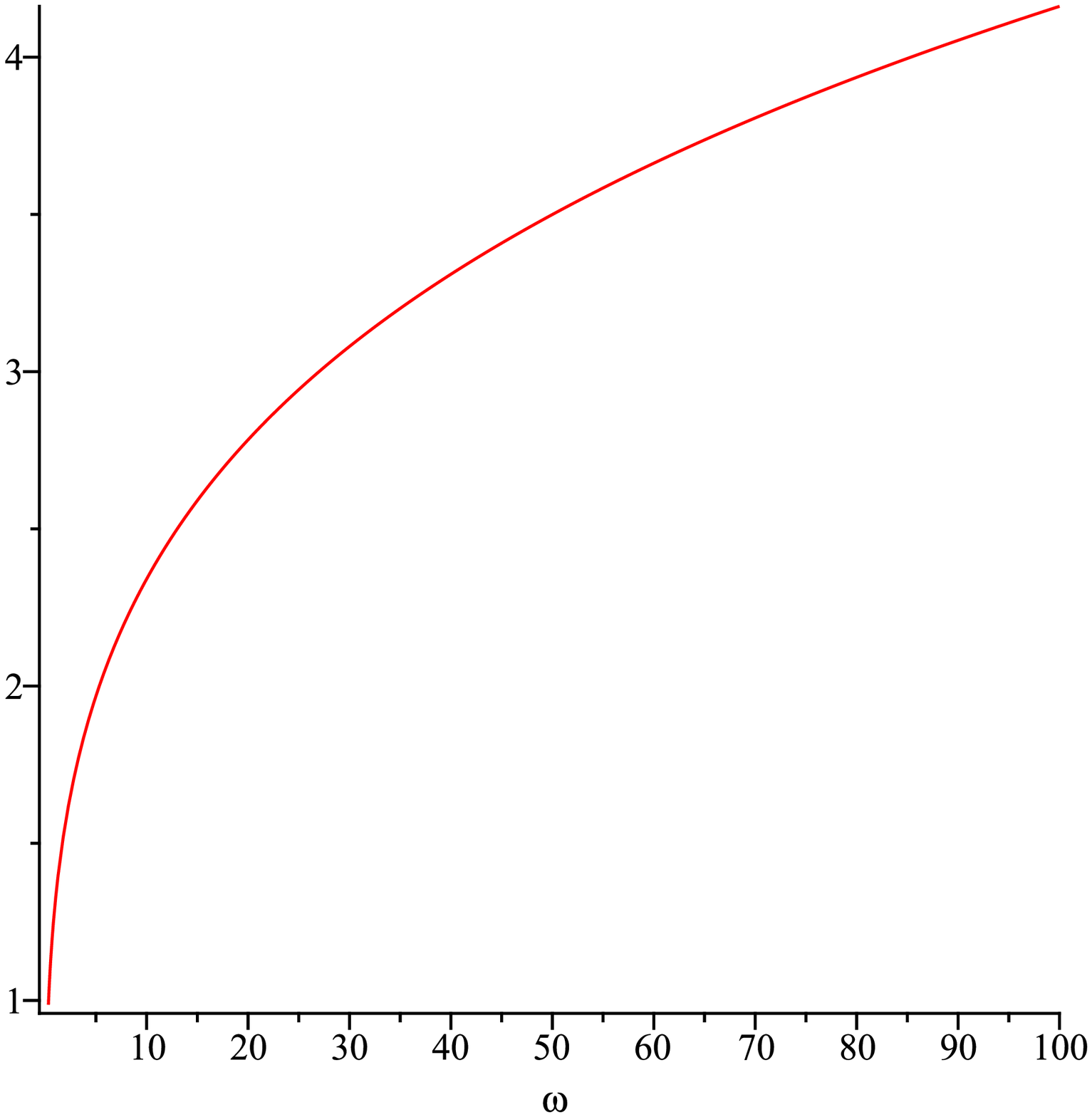}
		\caption{Graphic of $a$.}
	\end{minipage}\hfill
	\begin{minipage}[b]{0.4\linewidth}
		\includegraphics[scale=0.35]{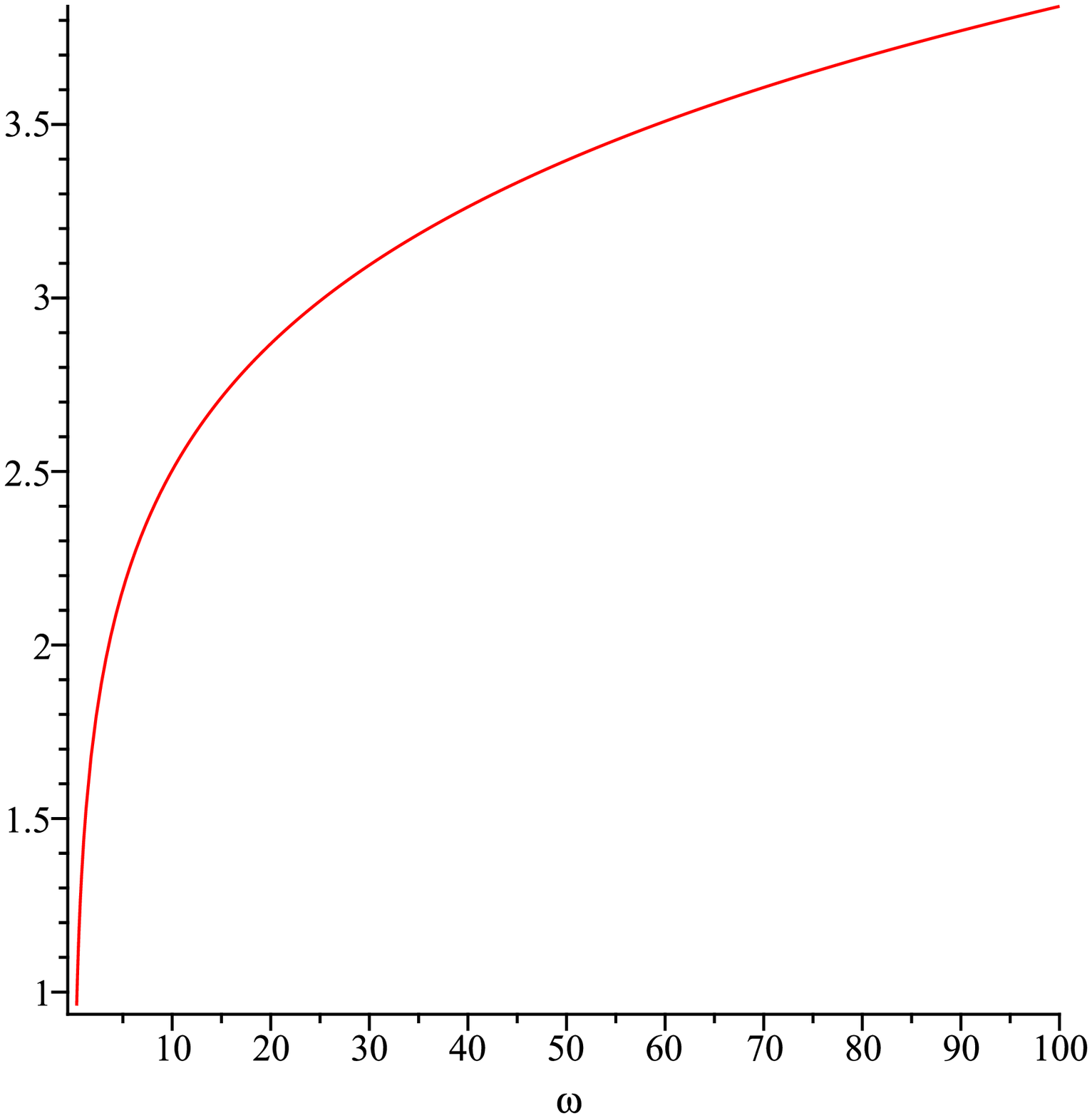}
		\caption{Graphic of $b$.}
	\end{minipage}
\end{figure}

\noindent $\bullet$ If $r=4$ we have $a$ and $b$ can be explicitly determined in terms of $\omega>\omega_0\approx0.19594$. Next pictures show us the behavior of $a$ and $b$ in terms of $\omega$.
\newpage
\begin{figure}[!htb]
	\centering
	\begin{minipage}[b]{0.4\linewidth}
		\includegraphics[scale=0.35]{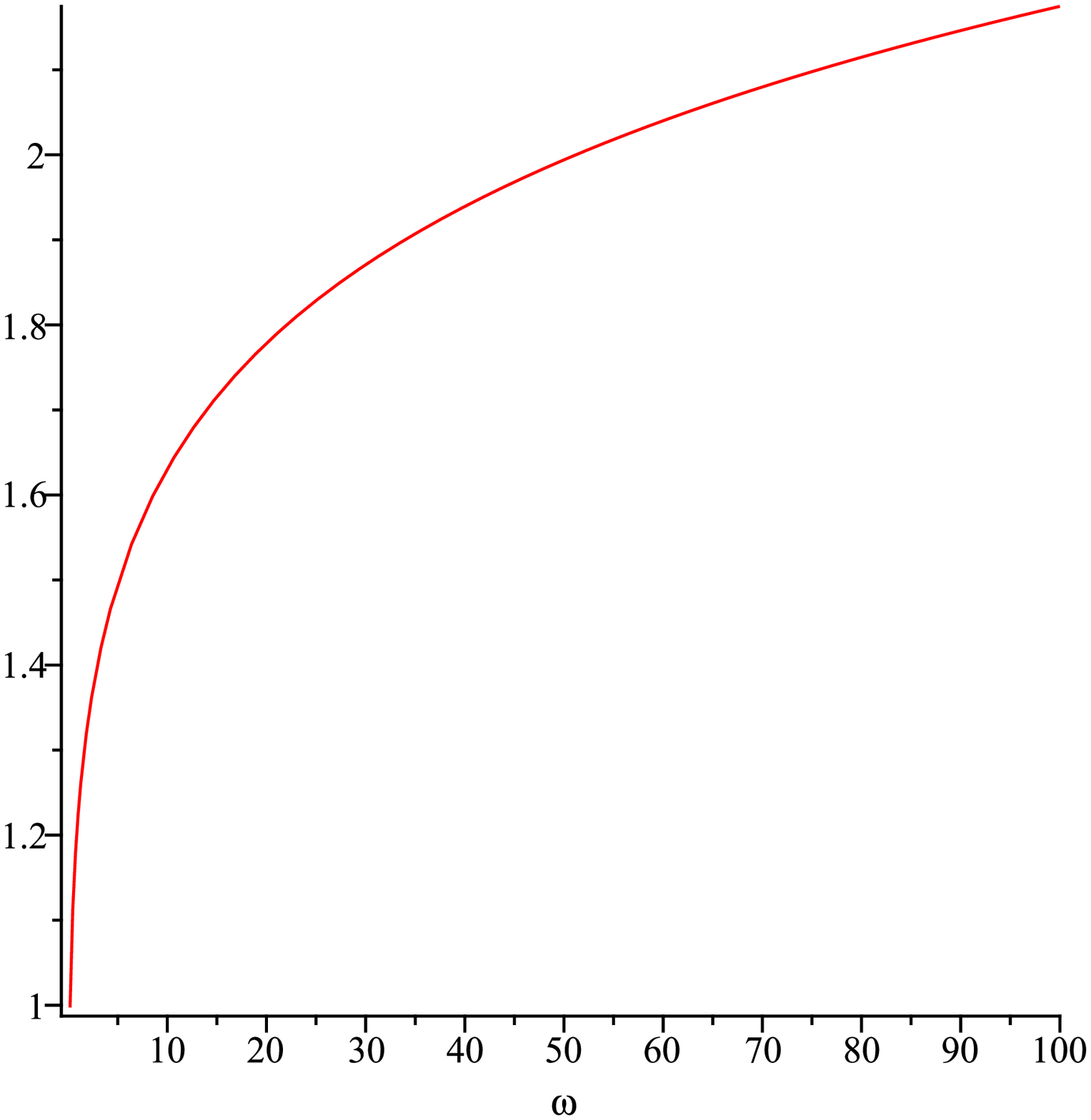}
		\caption{Graphic of $a$.}
	\end{minipage}\hfill
	\begin{minipage}[b]{0.4\linewidth}
		\includegraphics[scale=0.35]{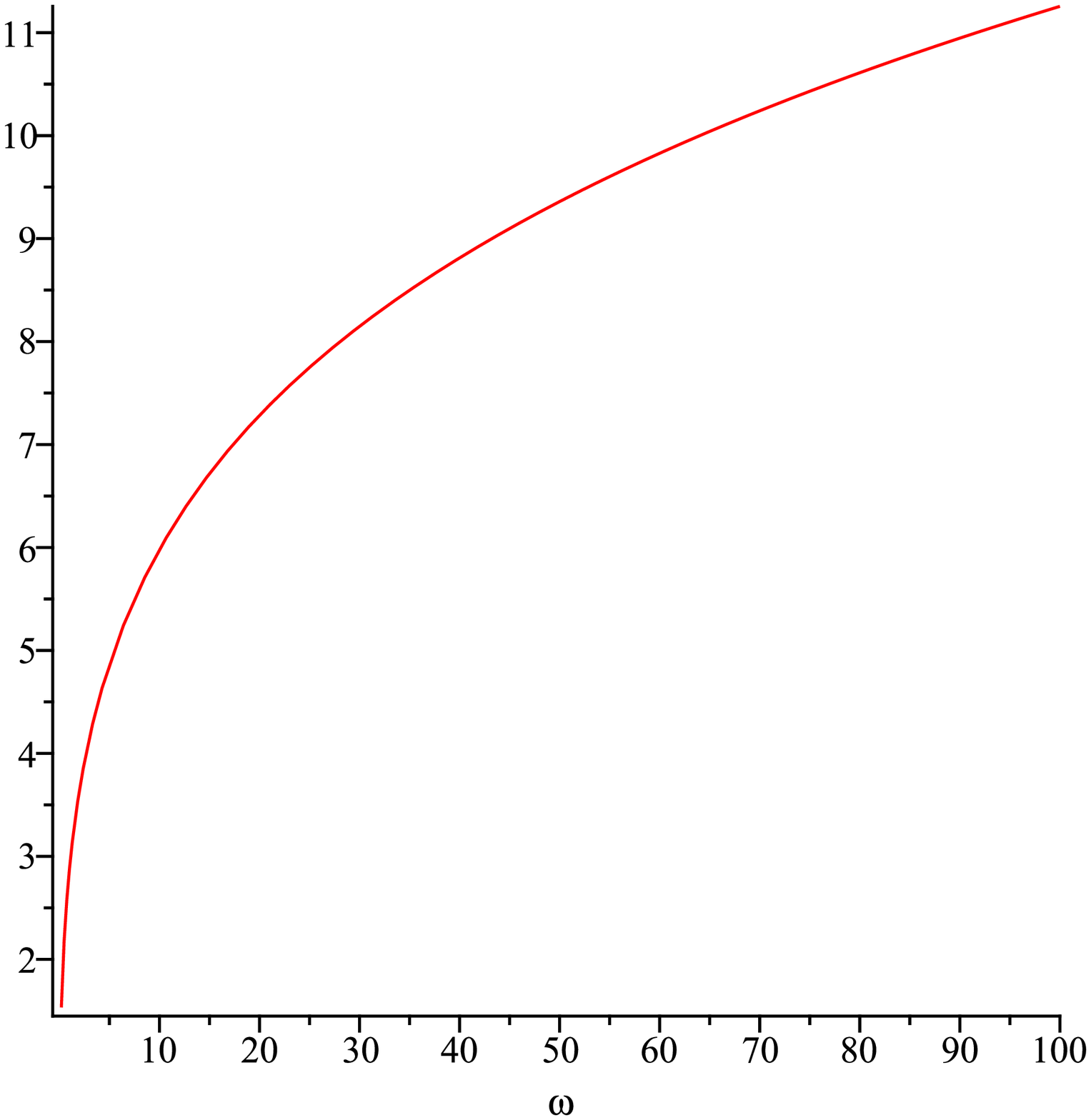}
		\caption{Graphic of $b$.}
	\end{minipage}
\end{figure}

\noindent $\bullet$ The general case can be determined under a set of local conditions. First of all, let us substitute the general form in $(\ref{solitary})$ into $\int_{\R}\phi'^2dx$ to get

\begin{equation}\label{gensolit1}
\int_{\R}\phi'^2dx=\frac{a^2b}{r^2}\int_{\R}{\rm sech^{\frac{2}{r}}}(x){\rm tanh^2}(x)dx=\frac{a^2b}{r^2}A(r),
\end{equation}
where $A(r)$ is a positive number depending on $r\geq1$. From $(\ref{soleq})$, $(\ref{solitary})$ and $(\ref{gensolit1})$ we deduce

\begin{equation}\label{gensolit2}
a=\frac{\sqrt{A(r)b(r^2\omega-b^2)}r}{A(r)b^2},
\end{equation}
with $b$ satisfying the following nonlinear equation

\begin{equation}\label{gensolit3}
- \left( {\frac {\sqrt {A(r)b \left( {r}^{2}\omega-{b}^{2} \right) }r}{A(r){
			b}^{2}}} \right) ^{2\,r+1}{r}^{4}+ \frac{\left( 1+r \right) \sqrt {A(r)b
	\left( {r}^{2}\omega-{b}^{2} \right) }r \left( {r}^{2}+{\frac {
		\left( {r}^{2}\omega-{b}^{2} \right) {r}^{2}}{{b}^{2}}} \right)} {A(r)}=0.
\end{equation}
\indent Let $r\geq1$ be fixed. For a fixed $\omega_0>0$ large enough and $b_0>0$ sufficiently small, we obtain by implicit function theorem the existence of an open interval $I_{\omega_0}$ around $\omega_0$ and an open interval $I_{b_0}$ around $b_0$ such that $(\ref{gensolit3})$ is valid for all $\omega\in I_{\omega_0}$ and $b\in I_{b_0}$. In addition, there exists a unique smooth function $B:I_{\omega_0}\rightarrow I_{b_0}$ such that $B(\omega)=b(\omega)$ in $I_{\omega_0}$. As a consequence, parameter $a$ in $(\ref{gensolit2})$ is well determined and depends smoothly on $\omega\in I_{\omega_0}$. Therefore, at least locally the profile in $(\ref{solitary})$ solves equation $(\ref{soleq})$.\\\\

\noindent {\bf Case 2.} $\mathbb{B}=\mathbb{T}$.\\
\noindent $\bullet$ If $r=1$ we need to find $a$, $b$ and $\omega$ in $(\ref{per1})$ in terms of $k$. To do so, let us consider $k\in (0,k_{*})$, where $k_{*}\approx 0.979653$. This restriction enables us to consider smooth parameters $a$, $b$ and $\omega$ in terms of $k$ given by
\begin{equation}\label{ak}a=\displaystyle\frac{\sqrt{6\pi}K(k)}{\sqrt{8(1-k^2)K(k)^4-4(2-k^2)E(k)K(k)^3+3\pi^3}},\end{equation}

\begin{equation}\label{bk}b=\displaystyle\frac{K(k)}{\pi},
\end{equation} and

\begin{equation}\label{wk}
\omega=\displaystyle\frac{3(2-k^2)\pi K(k)^2}{8(1-k^2)K(k)^4-4(2-k^2)E(k)K(k)^3+3\pi^3}.
\end{equation}
We see from $(\ref{ak})$ and $(\ref{wk})$ that $a$ and $\omega$ must satisfy $a>\frac{\sqrt{2}}{2}$ and $\omega>\frac{1}{2}$. Moreover, both parameters are strictly increasing functions in terms of $k$ according to the Figures 7 and 8.

\begin{figure}[!htb]
	\centering
	\begin{minipage}[b]{0.4\linewidth}
		\includegraphics[scale=0.33]{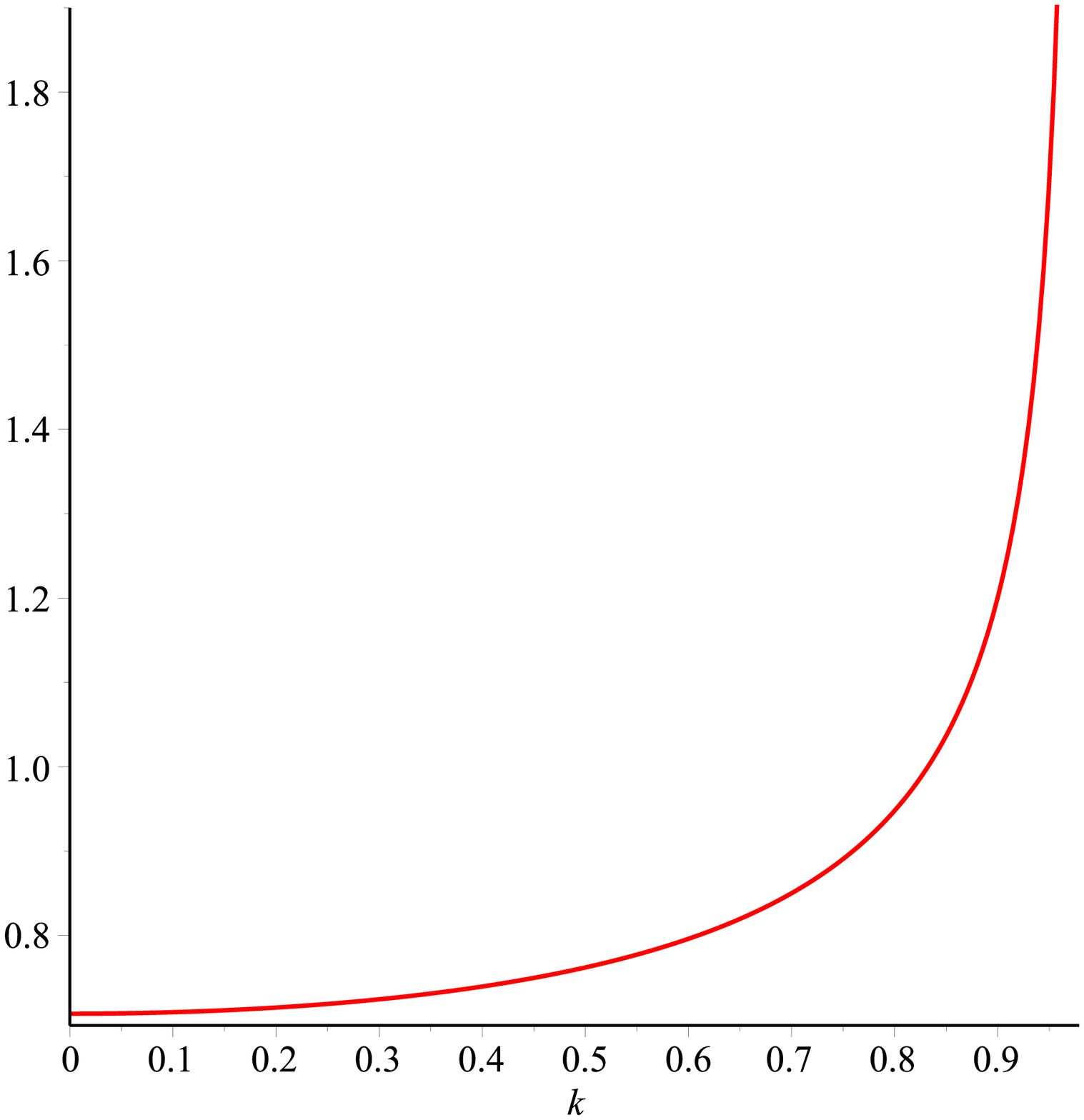}
		\caption{Graphic of $a$.}
	\end{minipage}\hfill
	\begin{minipage}[b]{0.4\linewidth}
		\includegraphics[scale=0.33]{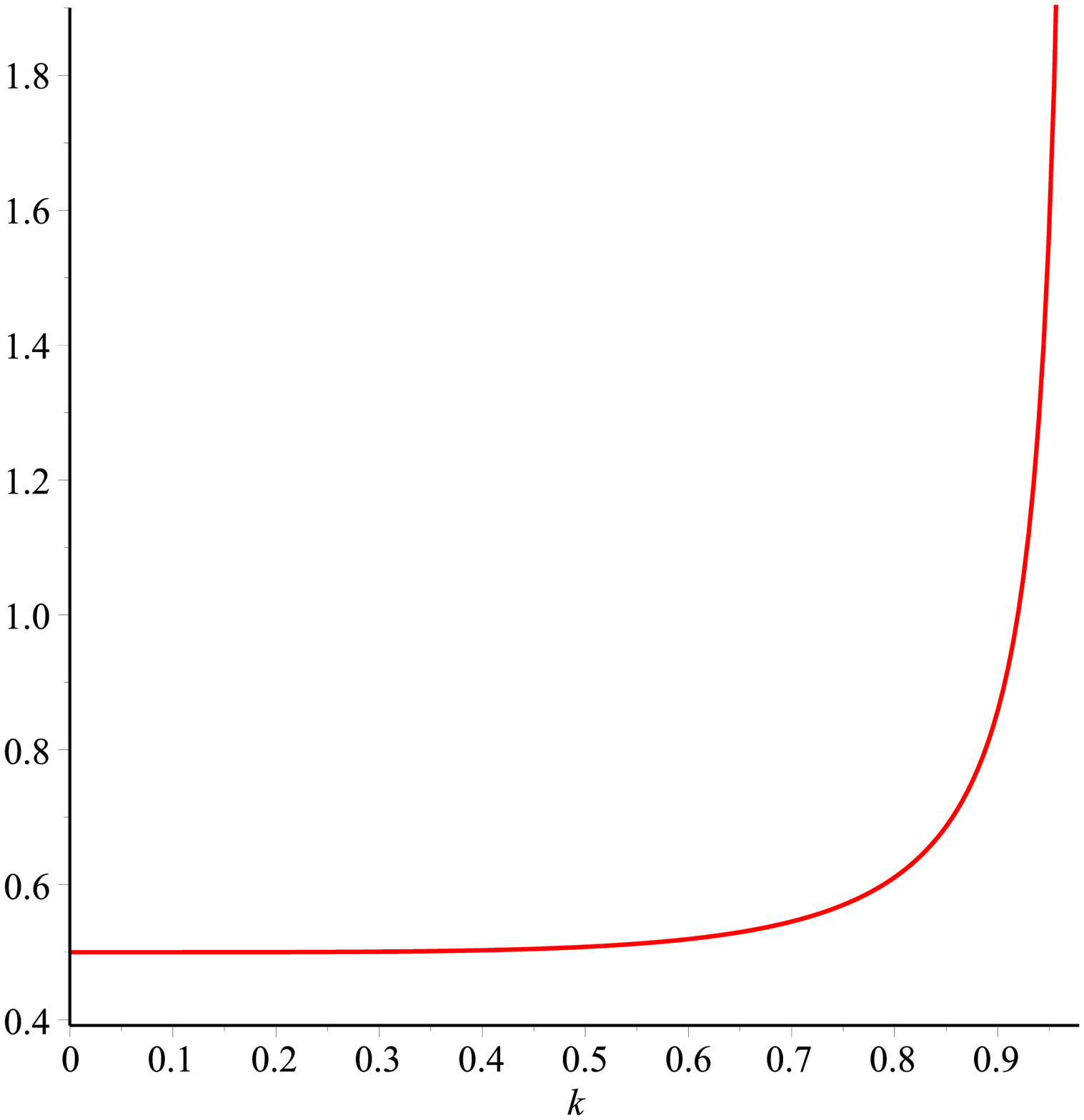}
		\caption{Graphic of $\omega$.}
	\end{minipage}
\end{figure}

\noindent $\bullet$ If $r=2$, we consider
$b=\displaystyle\frac{K(k)}{\pi},$
\begin{equation}\label{alphak}\alpha=-k^2+1-\sqrt{k^4-k^2+1},\end{equation} and \begin{equation}\label{wk4}\omega=\displaystyle\frac{a^4k^2(\alpha k^2-k^2-\alpha)}{\alpha^2(\alpha-2)}.\end{equation}  The denominator in $(\ref{per2})$ makes sense since $\alpha<0$. We omit the expression for $a=a(k)$ to simplify the notation. We can plot $a$ and $\omega$ in terms of $k$ according to the Figures 9 and 10.

\newpage 
\begin{figure}[!htb]
	\centering
	\begin{minipage}[b]{0.4\linewidth}
		\includegraphics[scale=0.33]{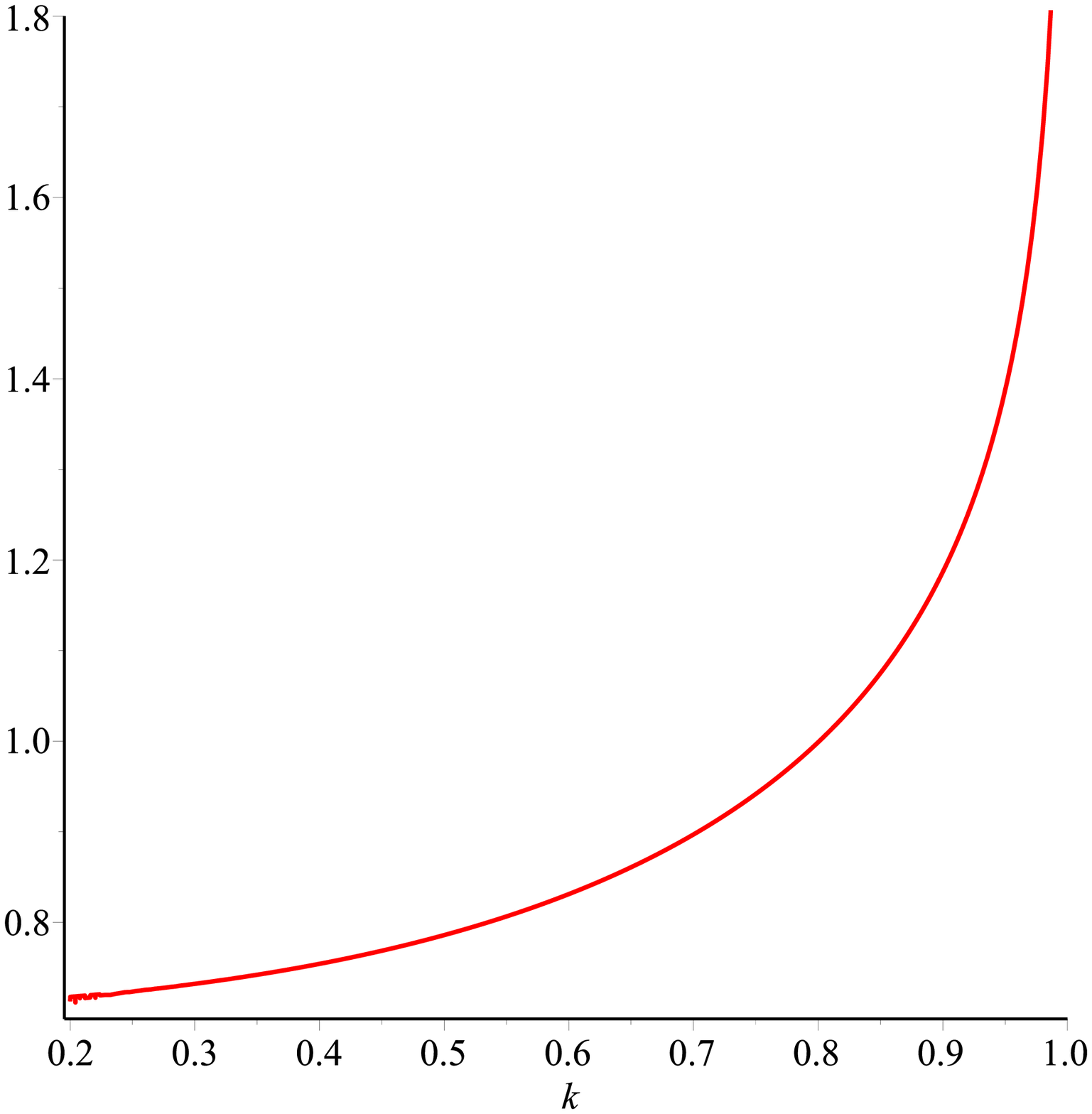}
		\caption{Graphic of $a$.}
	\end{minipage}\hfill
	\begin{minipage}[b]{0.4\linewidth}
		\includegraphics[scale=0.33]{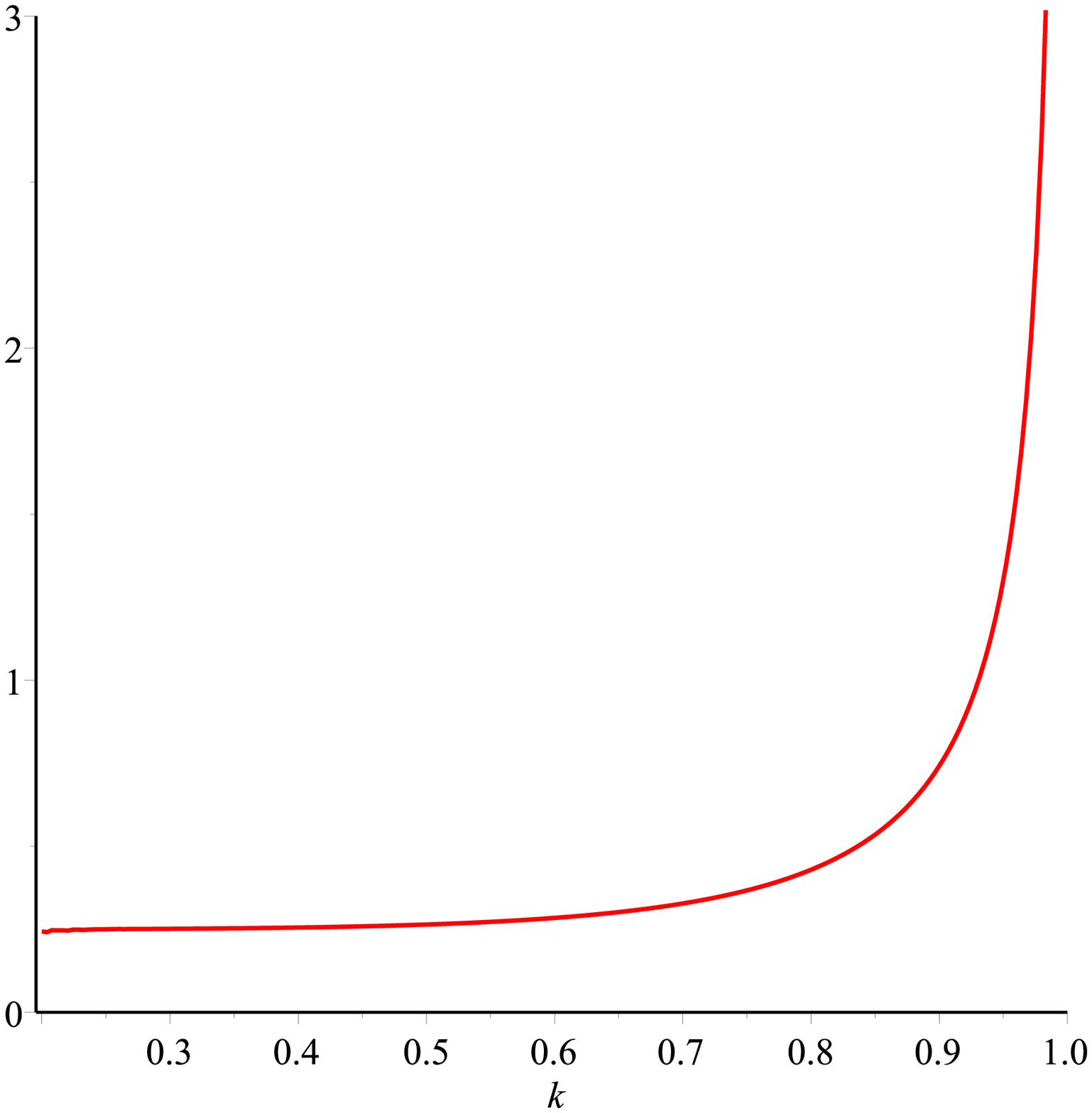}
		\caption{Graphic of $\omega$.}
	\end{minipage}
\end{figure}

\subsection{Local and global well-posedness results}\label{gwp}

\indent We present local and global well-posedness results for the Cauchy problem associated to the equation $(\ref{nls})$ given by
\begin{equation}\label{GNLS1}
\left\{
\begin{aligned}
& iu_{t} +\left(1+\int_{\mathbb{B}}|u_x|^2dx\right)u_{xx}+ |u|^{2r}u =0, ~\hbox{ in }\mathbb{B} \times \R^{+},\\
&u(0)=u_0(x), ~ x\in \mathbb{B}.
\end{aligned}
\right.\end{equation}
\indent  Concerning the case $\mathbb{B}=\mathbb{T}$, global solutions in $H^2(\mathbb{T})$ are determined using the compact embedding $H^s(\mathbb{T})\hookrightarrow L^q(\mathbb{T})$ for $s,q\geq1$ and a convenient Galerkin's approximation combined with good bounds of the approximate solutions. In addition, the compact embedding $H^2(\mathbb{T})\hookrightarrow H^1(\mathbb{T})$ allows us to prove the existence and uniqueness of global solutions in $C([0,+\infty);H^1(\mathbb{T}))$ employing density arguments. Since the proof in the periodic case is standard, we only consider the case $\mathbb{B}=\R$ and solutions in $H^1(\mathbb{R})$.\\

\indent  Let us consider the auxiliary initial value problem associated to the $(\ref{GNLS1})$ given by

 \begin{equation}\label{GNLS2}
 \left\{
 \begin{aligned}
 & iv_{t} +\beta(t)v_{xx}+ |v|^{2r}v =0, ~\hbox{ in }\mathbb{R} \times \R^{+},\\
 &v(x,t_0)=v_0(x), ~ x\in \mathbb{R}.
 \end{aligned}
 \right.\end{equation}
where $t_0\in\R^{+}$ is a fixed real number and $\beta$ is a continuous real-valued function depending on the time $t\in\R^{+}$. For each pair $(t,l)\in\R^{+}\times\R^{+}$ we denote by $S(t,l)$ the propagator associated to the linear part of $(\ref{GNLS2})$. For a fixed $t_0\in\R$ one has
\begin{equation}
S(t,t_0)v_0(x)=
\left(e^{-iB(t,t_0)\xi^2}\hat{v}_0(\xi)\right)^{\widecheck{}}(x),
\label{linprop1}
\end{equation}
where $\hat{f}$ is the Fourier Transform of $f$ in $L^2(\R)$, $\check{f}$ denotes the inverse Fourier Transform in the same space and $B$ is defined by $B(t,l):=\int_{l}^{t}\beta(\tau)d\tau$. Let $t,m,l$ be real numbers. The linear propagator $S(t,l)$ satisfies the following basic properties
\begin{equation}
S(t,l)=S(t,m)S(m,l),\  S(t,l)=S(l,t)^{-1}\ \mbox{and}\ S(t,l)=S(t,0)S(l,0)^{-1}:=S(t)S(l)^{-1}.
\label{prop1}
\end{equation}
 If $s\geq0$ and $t_0$ is a fixed number, the linear propagator $S(t,t_0)$ is an isometry in $H^s(\R)$, that is, for all $f\in H^s(\mathbb{R})$ one has $||S(t,t_0)f||_{H^s}=||f||_{H^s}$.\\

 \indent Similarly to the arguments in \cite{cazenave3}, we can prove the following following results.

 \begin{proposition}
 	Let $(p,q)$, $(p_0,q_0)$ and $(p_1,q_1)$ be any admissible pairs. The linear propagator associated to the linear part of the equation in $(\ref{GNLS2})$ satisfies
 	\begin{equation}
 	\left(\int_{\R^{+}}||S(t,\tau)f||_{L^p}^qdt\right)^{\frac{1}{q}}\leq C_1||f||_{L^2},
 	\label{stric1}
 	\end{equation}
 	\begin{equation}
 	\left(\int_{\R^{+}}\left\|\int_{\R^{+}}S(t,\tau)F(\cdot,\tau)
 	d\tau\right\|_{L^p}^qdt\right)^{\frac{1}{q}}\leq C_2\left(\int_{\R^{+}}||F(\cdot,t)||_{L^{p'}}^{q'}dt\right)
 	^{\frac{1}{q'}},
 	\label{stric2}
 	\end{equation}
and

\begin{equation}
\left(\int_{0}^T\left\|\int_{0}^{t}S(t,\tau)F(\cdot,\tau)
d\tau\right\|_{L^{p_1}}^{q_1}dt\right)^{\frac{1}{q_1}}\leq C_3\left(\int_{0}^{T}||F(\cdot,t)||_{L^{p_0'}}^{q_0'}dt\right)
^{\frac{1}{q_0'}},
\label{stric3}
\end{equation} 	
where for $i=1,2,3$ one has that $C_i>0$ are constants depending on the admissible pairs.
 	\label{strichartz}
 \end{proposition}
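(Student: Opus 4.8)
The plan is to mimic the classical Strichartz theory for the free Schr\"odinger group (as in \cite{cazenave3}), replacing the constant-coefficient propagator $e^{it\partial_x^2}$ by the two-parameter family $S(t,l)$ defined in \eqref{linprop1}. The whole argument rests on the observation that $S(t,l)$ inherits all the estimates of the free group \emph{uniformly in the parameters}, because $B(t,l)=\int_l^t\beta(\tau)\,d\tau$ enters only through the phase $e^{-iB(t,l)\xi^2}$, and the dispersive decay depends on $B(t,l)$ only through its absolute value. Concretely, I would first record the pointwise dispersive bound: writing $S(t,l)$ as convolution with a kernel that is (up to constants) $|B(t,l)|^{-1/2}e^{ix^2/4B(t,l)}$, one gets
\begin{equation*}
\|S(t,l)f\|_{L^{\infty}}\leq \frac{C}{|B(t,l)|^{1/2}}\|f\|_{L^1},
\end{equation*}
valid whenever $B(t,l)\neq 0$, together with the $L^2$ isometry $\|S(t,l)f\|_{L^2}=\|f\|_{L^2}$ already stated in the excerpt. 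Riesz--Thoren interpolation then yields $\|S(t,l)f\|_{L^p}\leq C|B(t,l)|^{-(1/2-1/p)}\|f\|_{L^{p'}}$ for $2\leq p\leq\infty$.

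Next I would run the $TT^*$ argument. Fix an admissible pair $(p,q)$; the estimate \eqref{stric1} is, by duality, equivalent to the boundedness of $f\mapsto \int_{\R^+}S(t,\tau)^*F(\cdot,\tau)\,d\tau$ from $L^{q'}_tL^{p'}_x$ to $L^2_x$, which in turn follows from the bilinear form estimate
\begin{equation*}
\left|\int\!\!\int \langle S(t,\tau)S(s,\tau)^*F(s),G(t)\rangle\,ds\,dt\right|\leq C\|F\|_{L^{q'}_tL^{p'}_x}\|G\|_{L^{q'}_tL^{p'}_x}.
\end{equation*}
Here I use the group property $S(t,\tau)S(s,\tau)^{-1}=S(t)S(s)^{-1}=S(t,s)$ from \eqref{prop1}, so the composition is again a member of the family and the dispersive bound applies with weight $|B(t,s)|^{-(1/2-1/p)}$. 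The Hardy--Littlewood--Sobolev inequality in the time variable then closes the estimate, \emph{provided} the weight $|B(t,s)|$ behaves like $|t-s|$; this is exactly where the hypothesis that $\beta$ is continuous (hence locally bounded and, in the application, bounded below away from zero on the relevant time interval) is used, since $B(t,s)=\int_s^t\beta$ is then comparable to $t-s$. The inhomogeneous estimates \eqref{stric2} and \eqref{stric3} follow from \eqref{stric1} and its dual by the Christ--Kiselev lemma for the retarded (truncated) integral in \eqref{stric3}, exactly as in the constant-coefficient case.

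The step I expect to be the main obstacle is controlling the time weight $|B(t,s)|^{-1}$ uniformly: unlike the free case where $|B(t,s)|=|t-s|$ identically, here one only has $c|t-s|\leq |B(t,s)|\leq C|t-s|$ under a lower bound $\beta\geq c>0$, and if $\beta$ is merely continuous and positive this holds on compact time intervals but the constants degrade as the interval grows or if $\beta$ approaches zero. This is precisely why the global-in-time estimates \eqref{stric1}--\eqref{stric2} should be read with the understanding that $\beta$ stays bounded above and below (which is guaranteed in the application to \eqref{GNLS1} by the a priori $H^1$ bound on the solution, making $\beta(t)=1+\int_{\mathbb{B}}|u_x|^2\,dx$ bounded between $1$ and a finite constant), while \eqref{stric3} is stated on a finite interval $[0,T]$ where no such issue arises. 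Once this comparability is in hand, every remaining manipulation is the textbook Keel--Tao / Strichartz computation with $n=1$, so I would not belabor it.
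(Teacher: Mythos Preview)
Your proposal is correct and is in fact considerably more detailed than what the paper offers: the paper's entire proof is the single line ``See \cite[Theorem 2.3.3]{cazenave3}.'' In other words, the authors simply invoke the classical Strichartz estimates from Cazenave's monograph without spelling out the adaptation to the two-parameter family $S(t,l)$. Your sketch supplies precisely that adaptation---the dispersive bound via the explicit kernel, the $TT^*$ argument reduced to Hardy--Littlewood--Sobolev in time, and Christ--Kiselev for the retarded estimate---and you correctly isolate the one nontrivial point the citation hides: the time weight is $|B(t,s)|^{-(1/2-1/p)}$ rather than $|t-s|^{-(1/2-1/p)}$, so one needs $\beta$ bounded away from zero (and above) on the relevant interval to recover the standard fractional-integration structure. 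Since in the intended application $\beta(t)=1+\|u_x(t)\|_{L^2}^2\geq 1$, this lower bound is automatic, which is presumably why the paper is content with the bare reference.
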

\begin{proof}
	See \cite[Theorem 2.3.3]{cazenave3}.
	
\end{proof}

\indent In what follows, we shall consider $t_0=0$ in $(\ref{GNLS2})$ and we restrict the Cauchy problems $(\ref{GNLS1})$ and $(\ref{GNLS2})$ to the case $(x,t)\in \R\times [0,+\infty)$. Using the estimates in the last proposition, we can prove next result employing a fixed point argument.
\begin{proposition}\label{teo11}
	For all $v_0\in H^1(\mathbb{R})$ there exists $T_{max}>0$
	and a unique solution $v$ related to the equation $(\ref{GNLS2})$
	 such that
	$v\in C([0,T_{max});H^1(\mathbb{R}))\cap C^1([0,T_{max});H^{-1}(\mathbb{R})).$ In addition, there is a blow-up alternative in the sense that if $T_{max}<0$, then $||v(t)||_{H^1}\rightarrow +\infty$ as $t\rightarrow T_{\max}$. The solution in fact has a smoothing effect in the sense that $v\in L^{q}([0,T_{max});W^{1,p}(\R))$, where $(p,q)$ is an admissible pair.
	
\end{proposition}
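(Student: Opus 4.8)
The plan is to put $(\ref{GNLS2})$ (with $t_0=0$) in its Duhamel form
\[
v(t)=S(t,0)v_0+i\int_0^t S(t,\tau)\big(|v(\tau)|^{2r}v(\tau)\big)\,d\tau=:\Psi(v)(t),
\]
and to solve it by a contraction mapping argument in a ball of the energy space. Concretely, I would work on
\[
X_T^M:=\Big\{v\in C([0,T];H^1(\R)):\ \sup_{t\in[0,T]}\|v(t)\|_{H^1}\le M\Big\},
\]
equipped with the (weaker) distance $d(u,v)=\sup_{t\in[0,T]}\|u(t)-v(t)\|_{L^2}$, with $M\sim\|v_0\|_{H^1}$ fixed and $T>0$ to be chosen small. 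The two inputs are: $S(t,\tau)$ is an isometry on every $H^s(\R)$ and commutes with $\partial_x$ (recorded before Proposition~\ref{strichartz}), and $H^1(\R)\hookrightarrow L^\infty(\R)$. Since the problem is $H^1$-subcritical in one dimension for all $r\ge1$ (the scaling-critical index $\tfrac12-\tfrac1{2r}$ is $<\tfrac12$), the elementary pointwise bounds $\big||v|^{2r}v\big|\lesssim|v|^{2r+1}$ and $\big|\partial_x(|v|^{2r}v)\big|\lesssim|v|^{2r}|\partial_x v|$ together with Hölder give $\big\||v|^{2r}v\big\|_{H^1}\lesssim\|v\|_{L^\infty}^{2r}\|v\|_{H^1}\lesssim\|v\|_{H^1}^{2r+1}$; hence
\[
\|\Psi(v)(t)-S(t,0)v_0\|_{H^1}\le C\int_0^t\|v(\tau)\|_{H^1}^{2r+1}\,d\tau\le CTM^{2r+1},
\]
so $\Psi$ maps $X_T^M$ into itself for $T$ small. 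Likewise, from $\big||u|^{2r}u-|v|^{2r}v\big|\lesssim(|u|^{2r}+|v|^{2r})|u-v|$ one obtains $d(\Psi(u),\Psi(v))\le CTM^{2r}\,d(u,v)$, so $\Psi$ is a contraction once $T$ (depending only on $\|v_0\|_{H^1}$) satisfies $CTM^{2r}<1$. The Strichartz estimates of Proposition~\ref{strichartz} are then used to upgrade the regularity: applying $(\ref{stric1})$ and $(\ref{stric3})$ to the fixed point $v$ and bounding the nonlinearity in the dual admissible norm by the same Hölder$+$Sobolev inequality yields $v\in L^q([0,T];W^{1,p}(\R))$ for every admissible pair $(p,q)$, which is the asserted smoothing effect.

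The fixed point $v$ is, by construction, a solution of $(\ref{GNLS2})$ lying in $C([0,T];H^1(\R))$: continuity into $H^1$ of the Duhamel integral follows from the strong continuity of $\tau\mapsto S(t,\tau)$, the bound above and dominated convergence. Reading the equation, $v_t=i\big(\beta(t)v_{xx}+|v|^{2r}v\big)$ belongs to $C([0,T];H^{-1}(\R))$ (here $\beta$ is continuous, $v_{xx}\in C_TH^{-1}$, and $v\mapsto|v|^{2r}v$ is continuous from $H^1$ into $H^1\hookrightarrow H^{-1}$), whence $v\in C^1([0,T];H^{-1}(\R))$. Uniqueness in this class is immediate from the contraction estimate: two solutions agree on a small interval, and a standard connectedness argument propagates the identity to their common interval of existence. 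Letting $T_{\max}$ be the supremum of existence times and concatenating local solutions produces $v\in C([0,T_{\max});H^1(\R))\cap C^1([0,T_{\max});H^{-1}(\R))$. For the blow-up alternative, suppose $T_{\max}<+\infty$ while $\liminf_{t\to T_{\max}}\|v(t)\|_{H^1}=:L<\infty$; because the existence time delivered by the contraction depends only on the $H^1$-size of the datum, restarting the construction at some $t_1<T_{\max}$ with $\|v(t_1)\|_{H^1}$ close to $L$ and $T_{\max}-t_1$ below the corresponding existence time extends $v$ beyond $T_{\max}$, a contradiction; hence $\|v(t)\|_{H^1}\to+\infty$ as $t\to T_{\max}$ (the ``$T_{\max}<0$'' in the statement being a typo for $T_{\max}<+\infty$).

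The truly delicate points are organizational rather than analytic. First, one must check that $X_T^M$ with the $L^2$-distance is a complete metric space: a $d$-Cauchy sequence has an $L^\infty_TL^2$ limit which, by lower semicontinuity of $\|\cdot\|_{H^1}$, still lies in the $H^1$-ball, and the usual argument upgrades $L^2$-continuity plus uniform $H^1$-bound to membership in $C([0,T];H^1)$. Second, one must verify that every constant above is uniform in the time-dependent coefficient $\beta$; the only properties invoked are that $S(t,\tau)$ is an $H^s$-isometry and obeys the group laws in $(\ref{prop1})$, together with the Strichartz bounds of Proposition~\ref{strichartz}, all of which are independent of the particular $\beta$. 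This uniformity is exactly what will later allow one to reinsert $\beta(t)=1+\int_{\R}|u_x(\cdot,t)|^2dx$ and return from $(\ref{GNLS2})$ to $(\ref{GNLS1})$.
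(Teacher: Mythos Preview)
Your proof is correct and follows the standard Duhamel-plus-contraction scheme that the paper itself simply cites from Cazenave (Theorem 4.8.1). The only variation is that you exploit the one-dimensional embedding $H^1(\R)\hookrightarrow L^\infty(\R)$ to close the contraction directly in $H^1$ and invoke the Strichartz estimates of Proposition~\ref{strichartz} only a posteriori for the $L^q_tW^{1,p}_x$ smoothing, a harmless simplification available in dimension one.
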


\begin{proof}
	The proof of this proposition is similar to \cite[Theorem 4.8.1]{cazenave3}.
\end{proof}

\indent  We are in position to consider  $\beta(t)=1+\int_{\R}|u_x(x,t)|^2dx$ in the equation $(\ref{GNLS2})$ and according with the fixed point argument used in Proposition $\ref{teo11}$ we see that  $\beta$ is continuous in time. Proposition $\ref{teo11}$ establishes the existence of local solutions in $H^1$ associated to the Cauchy problem $(\ref{GNLS1})$.

\begin{proposition}\label{teo13}
	For all $u_0\in H^1(\mathbb{R})$ there exists $T_{max}>0$
	and a unique solution $u$ related to the equation $(\ref{GNLS1})$
	such that
	$u\in C([0,T_{max});H^1(\mathbb{R}))\cap C^1([0,T_{max});H^{-1}(\mathbb{R})).$ In addition, there is a blow-up alternative in the sense that if $T_{max}<0$, then $||u(t)||_{H^1}\rightarrow +\infty$ as $t\rightarrow T_{\max}$. 	
\end{proposition}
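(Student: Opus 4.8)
The plan is to remove the nonlocal dispersion coefficient by a change of the time variable and then run the classical $H^1$ fixed-point argument. Given $u_0\in H^1(\R)$, observe that for any solution $u$ of $(\ref{GNLS1})$ the coefficient $m(t):=1+\|u_x(t)\|_{L^2}^2$ is independent of $x$ and satisfies $m(t)\ge 1$; hence $\tau(t):=\int_0^t m(\sigma)\,d\sigma$ is a strictly increasing $C^1$ change of time, and, since $\|u_x\|_{L^2}$ is invariant under $u(x,t)\mapsto v(x,\tau):=u(x,t(\tau))$, the function $v$ solves
\begin{equation*}
i v_\tau + v_{xx} + \frac{1}{1+\|v_x\|_{L^2}^2}\,|v|^{2r}v = 0,\qquad v(\cdot,0)=u_0 .
\end{equation*}
This is a nonlinear Schr\"odinger equation with the \emph{standard} dispersion $v_{xx}$ whose only nonlocal ingredient is the scalar factor $(1+\|v_x\|_{L^2}^2)^{-1}\in(0,1]$ in front of the nonlinearity.

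For this rescaled problem I would run a contraction on a ball of radius $R:=2\|u_0\|_{H^1}$ in $C([0,\widetilde T];H^1(\R))$ (enlarged by $L^q([0,\widetilde T];W^{1,p}(\R))$ for a convenient admissible pair if the smoothing effect is also wanted), using the Duhamel map
\begin{equation*}
\Psi(v)(\tau):=e^{i\tau\partial_x^2}u_0+i\int_0^\tau e^{i(\tau-\sigma)\partial_x^2}\,\frac{|v(\sigma)|^{2r}v(\sigma)}{1+\|v_x(\sigma)\|_{L^2}^2}\,d\sigma
\end{equation*}
and the classical estimates for $e^{i\tau\partial_x^2}$ (the case $\beta\equiv1$ of Proposition~\ref{strichartz}). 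The only point beyond the classical $H^1$ theory is the nonlinear estimate, which closes because the scalar factor is bounded by $1$ and Lipschitz, $\bigl|(1+\|v_{1x}\|_{L^2}^2)^{-1}-(1+\|v_{2x}\|_{L^2}^2)^{-1}\bigr|\le \|v_{1x}-v_{2x}\|_{L^2}\bigl(\|v_{1x}\|_{L^2}+\|v_{2x}\|_{L^2}\bigr)$, while $\||v_1|^{2r}v_1-|v_2|^{2r}v_2\|_{H^1}\lesssim(\|v_1\|_{H^1}^{2r}+\|v_2\|_{H^1}^{2r})\|v_1-v_2\|_{H^1}$ exactly as for the ordinary equation (using $H^1(\R)\hookrightarrow L^\infty(\R)$); hence $\Psi$ is a contraction on the ball for $\widetilde T$ small depending only on $\|u_0\|_{H^1}$. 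This yields, as in \cite[Theorem~4.8.1]{cazenave3}, a unique maximal solution $v\in C([0,\widetilde T_{\max});H^1)\cap C^1([0,\widetilde T_{\max});H^{-1})$ together with continuous dependence and the blow-up alternative in $H^1$.

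Finally I would transfer back. On an interval where $\|v\|_{C([0,\widetilde T];H^1)}\le R$ one has $\widetilde m(\tau):=1+\|v_x(\tau)\|_{L^2}^2\in[1,1+R^2]$, so $t(\tau):=\int_0^\tau\widetilde m(\sigma)^{-1}\,d\sigma$ is a strictly increasing $C^1$ bijection of $[0,\widetilde T]$ onto some $[0,T]$ with $T\ge\widetilde T/(1+R^2)$; setting $u(x,t):=v(x,\tau(t))$ with $\tau$ the inverse of $t$ and performing the substitution $\sigma=\tau(s)$ in $v=\Psi(v)$ turns it into the Duhamel identity $u(t)=S(t,0)u_0+i\int_0^tS(t,s)|u(s)|^{2r}u(s)\,ds$, with $S$ the linear propagator of $(\ref{linprop1})$ for the coefficient $m(t)=1+\|u_x(t)\|_{L^2}^2$; thus $u$ solves $(\ref{GNLS1})$, and $u\in C([0,T];H^1)\cap C^1([0,T];H^{-1})$ because $\tau\leftrightarrow t$ is a $C^1$ homeomorphism. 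Uniqueness for $(\ref{GNLS1})$ and its blow-up alternative then follow from those for $v$ (or directly, since the existence time obtained depends only on $\|u_0\|_{H^1}$). The only feature genuinely new relative to the classical equation $(\ref{regnls})$ is the nonlocal dispersion coefficient, which a naive fixed point would encounter as the need to compare two $x$-Fourier multipliers $e^{-iB_j(t,\tau)\xi^2}$ --- a loss of derivatives; the time change above is precisely the device that turns it into the harmless Lipschitz factor $(1+\|v_x\|_{L^2}^2)^{-1}$, so that the main work reduces to the standard $H^1$ well-posedness theory.
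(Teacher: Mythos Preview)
Your argument is correct and takes a genuinely different route from the paper. The paper first treats the dispersion coefficient as a \emph{given} continuous function $\beta(t)$, develops Strichartz estimates for the associated two-parameter propagator $S(t,l)$ (Proposition~\ref{strichartz}), proves local well-posedness for that auxiliary problem by a fixed-point argument (Proposition~\ref{teo11}), and then simply sets $\beta(t)=1+\|u_x(t)\|_{L^2}^2$; the passage from the auxiliary result to the actual quasilinear problem is not detailed. Your time reparametrization $\tau=\int_0^t(1+\|u_x(s)\|_{L^2}^2)\,ds$ makes this passage explicit and transparent: since the paper's propagator is nothing but $e^{iB(t,l)\partial_x^2}$, working in the variable $\tau$ turns the dispersion into the standard $\partial_x^2$ and moves the entire nonlocality into the bounded, Lipschitz scalar factor $(1+\|v_x\|_{L^2}^2)^{-1}$ multiplying the power nonlinearity, where it is harmless for the ordinary one-dimensional $H^1$ contraction (no Strichartz is even needed, thanks to $H^1(\R)\hookrightarrow L^\infty(\R)$). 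This cleanly circumvents the loss-of-derivatives obstacle you identify when comparing two multipliers $e^{-iB_j(t,\tau)\xi^2}$. What the paper's framework buys is generality---Proposition~\ref{strichartz} applies to any continuous $\beta$---but for the specific statement in question your approach is shorter, more self-contained, and makes the dependence of the local existence time on $\|u_0\|_{H^1}$ (hence the blow-up alternative) immediate.
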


\begin{proof}
	See Proposition $\ref{teo11}$.
\end{proof}

\indent We finish this subsection with the following theorem which provides us the existence of global solutions in $H^1(\mathbb{B})$.

\begin{theorem}\label{propositionexist}
For $r\in[1,4)$ local solutions in $H^1$ for the Cauchy problem $(\ref{GNLS1})$ are global in time in the sense that $T$ can be chosen as $T=+\infty$. If $r=4$ and $||u_0||_{L^2}$ is small enough, the solution is also global in time.
\end{theorem}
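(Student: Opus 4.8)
The plan is to prove global existence by establishing an a priori bound on $\|u(t)\|_{H^1}$ on any finite time interval and then invoking the blow-up alternative from Proposition~\ref{teo13}. The conserved quantities $E$ in $(\ref{E})$ and $F$ in $(\ref{F})$ are the main tools: since $F(u(t))=F(u_0)$, the $L^2$ norm is automatically controlled, so the only issue is controlling $\|u_x(t)\|_{L^2}^2$. From conservation of $E$ we have, writing $N(t):=\|u_x(t)\|_{L^2}^2$,
\begin{equation}\label{aprioriE}
\frac{1}{2}N(t)+\frac{1}{2}N(t)^2 = E(u_0)+\frac{1}{2r+2}\int_{\mathbb{B}}|u(t)|^{2r+2}\,dx.
\end{equation}
The key structural observation is that the left-hand side of $(\ref{aprioriE})$ contains the \emph{quartic} term $\frac12 N(t)^2$, which is absent in the classical NLS energy; this extra coercivity is exactly what pushes the critical exponent from $r=2$ (classical 1D NLS) up to $r=4$ here.

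The core estimate is to bound $\int_{\mathbb{B}}|u|^{2r+2}\,dx$ by the Gagliardo--Nirenberg inequality
\begin{equation}\label{GNineq}
\|u\|_{L^{2r+2}}^{2r+2}\le C\,\|u_x\|_{L^2}^{r}\,\|u\|_{L^2}^{r+2}=C\,\|u_0\|_{L^2}^{r+2}\,N(t)^{r/2},
\end{equation}
valid on $\mathbb{B}=\R$; on $\mathbb{B}=\mathbb{T}$ one uses the periodic analogue, picking up an extra additive lower-order term that is harmless. Inserting $(\ref{GNineq})$ into $(\ref{aprioriE})$ yields
\begin{equation}\label{mainineqG}
\frac{1}{2}N(t)^2\le E(u_0)+C'\,N(t)^{r/2}
\end{equation}
with $C'=C'(\|u_0\|_{L^2},r)$. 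For $r\in[1,4)$ one has $r/2<2$, so the right-hand side grows strictly slower than the left-hand side as $N(t)\to\infty$; a standard continuity/bootstrap argument (using that $N(t)$ is continuous and $N(0)$ finite) then gives a uniform bound $N(t)\le M(\|u_0\|_{H^1},r)$ for all $t$ in the interval of existence. Hence $\|u(t)\|_{H^1}$ cannot blow up, and by the blow-up alternative $T_{max}=+\infty$.

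For the critical case $r=4$ the exponent on the right of $(\ref{mainineqG})$ equals $r/2=2$, matching the left-hand side, so the inequality becomes $\tfrac12 N(t)^2\le E(u_0)+C'\|u_0\|_{L^2}^{6}N(t)^{2}$. If $\|u_0\|_{L^2}$ is small enough that $C'\|u_0\|_{L^2}^{6}<\tfrac12$, the quadratic terms can be absorbed, leaving $N(t)$ bounded by a constant times $E(u_0)$; again the blow-up alternative closes the argument. I expect the main obstacle to be purely bookkeeping: making the continuity argument rigorous (ensuring the a priori bound does not merely hold at an isolated time but propagates, i.e. that the set of times where $N(t)\le M$ is open and closed), tracking the sharp constant in the Gagliardo--Nirenberg inequality needed for the smallness threshold in the $r=4$ case, and handling the periodic Sobolev inequality's extra additive term. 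None of these is deep, but the $r=4$ smallness condition does require the explicit form of $(\ref{GNineq})$ rather than a soft compactness argument.
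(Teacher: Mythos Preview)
Your proposal is correct and follows essentially the same route as the paper: energy conservation combined with the Gagliardo--Nirenberg inequality, exploiting the quartic term $\tfrac12\|u_x\|_{L^2}^4$ on the left to dominate $\|u_x\|_{L^2}^{r}$ on the right when $r<4$, and absorbing the matching powers when $r=4$ under a smallness assumption on $\|u_0\|_{L^2}$. The only cosmetic difference is that the paper keeps the linear term $\|u_x\|_{L^2}^2$ on the left as well and concludes directly, whereas you invoke a continuity/bootstrap argument; in fact for $r<4$ the algebraic inequality $\tfrac12 N^2\le E(u_0)+C'N^{r/2}$ already forces $N$ into a bounded set, so no bootstrap is needed.
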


\begin{proof} To prove the global theory, we employ the Gagliardo-Nirenberg inequality to estimate the last term of the identity $(\ref{E})$ in terms of  $||u_x||_{L^2}$ as
	\begin{equation}\label{GN1}
	||u||_{L^{2(r+1)}}\leq C_1||u_x||_{L^2}^{\frac{r}{2(r+1)}}||u||_{L^2}^{\frac{r+2}{2(r+1)}}+C_2||u||_{L^2},
	\end{equation}
	where $C_1>0$ and $C_2\geq0$ are constants with $C_2=0$ when $\mathbb{B}=\R$.

Since the $L^2$-norm is a conserved quantity for $(\ref{nls})$, we can assume that $(\ref{GN1})$ can be given in terms of our local solution determined in Proposition $\ref{teo11}$ and we rewrite it as

\begin{equation}\label{GN2}
||u||_{L^{2(r+1)}}\leq C_1||u_x||_{L^2}^{\frac{r}{2(r+1)}}||u_0||_{L^2}^{\frac{r+2}{2(r+1)}}+C_2||u_0||_{L^2},
\end{equation}
where we omit the temporal variable to simplify the notation.\\
\indent By $(\ref{E})$ we have

\begin{equation}\label{GN3}\begin{array}{llll}||u_x||_{L^2}^2+||u_x||_{L^2}^4&\leq& 2E(u_0)+\frac{1}{r+1}||u||_{L^{2r+2}}^{2r+2}\\\\
&\leq&2E(u_0)+ C_3||u_x||_{L^2}^r||u_0||_{L^2}^{r+2}+C_4(||u_0||_{L^2}). \end{array}\end{equation}
Therefore if $r\in[1,4)$ the solutions are global in time as requested. When $r=4$, we need to assume a convenient smallness on the initial data in the $L^2-$norm to get global solutions.

\end{proof}
\subsection{The definition of orbital stability}\label{deforbstab}

 In this subsection, we define our notion of orbital stability.  Since  \eqref{hamiltonian} is invariant by the transformations \eqref{l0} and \eqref{l00}, we define the orbit generated by $\Phi=(\phi,0)$ as
\begin{equation}\label{l1}
\begin{split}
\Omega_\Phi&=\{T_1(\theta) T_2(s)\Phi;\;\;\theta,s\in\R\}\\
&= \left\{ \left(\begin{array}{cc}
\cos\theta & \sin\theta\\
-\sin\theta & \cos\theta
\end{array}\right)\left(\begin{array}{c}
\phi(\cdot-s)\\
0
\end{array}\right);\;\;\theta,s\in\R  \right\}.
\end{split}
\end{equation}
Over $\Hh^1$, we define the pseudo-metric $d$ given by
$$
d(f,g):=\inf\{\|f-T_1(\theta)
T_2(s)g\|_{\Hh^1};\;\theta,s\in\R\}.
$$
\indent By definition, the distance between $f$ and $g$ is the distance between $f$ and the orbit generated by  $g$ under the action of rotations and translations. In particular,
\begin{equation}\label{l2}
d(f,\Phi)=d(f,\Omega_\Phi).
\end{equation}

\begin{definition}\label{stadef}
	Let $\Theta(x,t)=(\phi(x)\cos(\omega t), \phi(x)\sin(\omega t))$ be a standing wave for \eqref{hamiltonian}. We say that $\Theta$ is orbitally stable in $\Hh^1$ provided that, given $\ve>0$, there exists $\delta>0$ with the following property: if $U_0\in \Hh^1$ is an initial data associated to the Cauchy problem $(\ref{hamiltonian})$ and satisfying $\|U_0-\Phi\|_{\Hh^1}<\delta$, then the solution, $U(t)$, of \eqref{hamiltonian} with initial condition $U_0$ exist for all $t\geq0$ and satisfies
	$$
	d(U(t),\Omega_\Phi)<\ve, \qquad \mbox{for all}\,\, t\geq0.
	$$
	Otherwise, we say that $\Theta$ is orbitally unstable in $\Hh^1$.
\end{definition}

\section{Spectral Analysis} \label{sepecsec}

In this section we are going to use basic facts about Sturm-Liouville and Floquet theories to know the quantity and multiplicity of the non-positive eigenvalues of $\mathcal{L}$. Since $\mathcal{L}$ is a diagonal operator, its eigenvalues are given by the eigenvalues of the operators $\Lum$ and $\Ldois$.

\subsection{The spectrum of $\Lum$ and $\Ldois$ - case $\mathbb{B}=\mathbb{R}$.}\label{spectcaseR}
Attention will be turned to the spectrum of the operator $\Lum$ and $\Ldois$ for the case of solitary waves $(\ref{solitary})$. First, we present the spectral analysis for the operator $\Lum$.

\begin{proposition}\label{specL1}
The operator $\Lum$ in \eqref{L1} defined on $L^2(\R)$ with domain $H^2(\R)$
has a unique negative eigenvalue, which is simple with positive associated eigenfunction. The eigenvalue zero is
simple with associated eigenfunction $\phi'$. Moreover the rest of the
spectrum is bounded away from zero and the essential spectrum is the interval
$\left[\frac{\omega}{1+\int_{\mathbb{R}}\phi'^2dx},+\infty\right)$.
\end{proposition}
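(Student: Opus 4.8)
The plan is to relate $\Lum$ to a standard Schrödinger operator whose spectrum is classically known, and then handle the nonlocal perturbation $-2(\phi',\partial_x\cdot)_{L^2}\phi''$ as a rank-one correction. First I would write $\Lum = (1+\|\phi'\|_{L^2}^2)\widetilde{\mathcal{L}} - 2(\phi',\partial_x\cdot)_{L^2}\phi''$, where
$$
\widetilde{\mathcal{L}} := -\partial_x^2 + \frac{\omega}{1+\|\phi'\|_{L^2}^2} - \frac{2r+1}{1+\|\phi'\|_{L^2}^2}\phi^{2r}.
$$
Since $\phi(x) = a\sech^{1/r}(bx)$, the potential $\phi^{2r} = a^{2r}\sech^2(bx)$ is a Pöschl–Teller potential, so $\widetilde{\mathcal{L}}$ is (after rescaling $x$) a classical operator of the form $-\partial_x^2 + c_1 - c_2\sech^2(bx)$. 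Differentiating the ODE \eqref{soleq} with respect to $x$ shows that $\phi'$ lies in the kernel of the full operator $\Lum$ (this is the translation symmetry), and moreover — crucially — since $\phi'$ is odd, $(\phi',\phi')_{L^2}$ pairing in the nonlocal term is computed against $\partial_x\phi' = \phi''$; I would check that in fact $\phi'$ is an eigenfunction of $\widetilde{\mathcal{L}}$ as well, so that the nonlocal term, applied to $\phi'$, is consistent. More importantly, because $\phi'$ has exactly one zero (at $x=0$), Sturm–Liouville oscillation theory applied to $\widetilde{\mathcal{L}}$ tells us $0$ is the second eigenvalue of $\widetilde{\mathcal{L}}$, hence $\widetilde{\mathcal{L}}$ has exactly one negative eigenvalue (simple, with positive even eigenfunction), $0$ is simple, and the rest of the spectrum is positive; its essential spectrum is $[\omega/(1+\|\phi'\|_{L^2}^2),+\infty)$.

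Next I would transfer these conclusions to $\Lum$. The essential spectrum is unaffected by the relatively compact perturbation $-2(\phi',\partial_x\cdot)_{L^2}\phi''$ (a rank-one operator with smooth, exponentially decaying range and a bounded functional, hence compact from $H^2$ to $L^2$ relatively to $-\partial_x^2$), so $\sigma_{ess}(\Lum) = (1+\|\phi'\|_{L^2}^2)\cdot\sigma_{ess}(\widetilde{\mathcal{L}}) = [\omega/(1+\|\phi'\|_{L^2}^2),+\infty)$ after noting the leading coefficient times $\omega/(1+\|\phi'\|_{L^2}^2)$ equals $\omega/(1+\|\phi'\|_{L^2}^2)$... let me restate: the bottom of the essential spectrum of $\Lum$ is $(1+\|\phi'\|_{L^2}^2)\cdot \frac{\omega}{1+\|\phi'\|_{L^2}^2} = \omega$; but wait, the claimed interval is $[\frac{\omega}{1+\int\phi'^2},+\infty)$, so I would recompute carefully: in fact $\Lum f = -(1+\|\phi'\|^2)f'' + \omega f + \ldots$, and dividing through, the constant at infinity relative to the coefficient of $-f''$ is $\omega/(1+\|\phi'\|^2)$, which matches. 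So the essential spectrum is exactly as stated, bounded away from zero since $\omega>0$.

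For the discrete spectrum of $\Lum$ below the essential spectrum, I would use a two-step argument. Restricting to even functions, the nonlocal term $-2(\phi',\partial_x\cdot)_{L^2}\phi''$ annihilates even functions (since $\phi'$ is odd, $(\phi', g')_{L^2}=0$ when $g$ is even as $g'$ is odd... no: if $g$ is even then $g'$ is odd, and $\phi'$ is odd, so $(\phi',g')\ne 0$ in general). I would instead observe: on the \emph{odd} subspace, $\phi'$ is the ground state-type eigenfunction with eigenvalue $0$, and one checks $0$ is the smallest eigenvalue there (as $\phi'$ has no zeros in $(0,\infty)$); on the \emph{even} subspace, $g'$ is odd, but $(\phi', g')_{L^2}$ need not vanish, so I would instead argue via a min-max / continuity argument: deform the nonlocal coefficient from $0$ to $1$ via $\Lum^{(t)} := (1+\|\phi'\|^2)\widetilde{\mathcal{L}} - 2t(\phi',\partial_x\cdot)_{L^2}\phi''$, track the eigenvalues continuously, use that each $\Lum^{(t)}$ is self-adjoint with the same essential spectrum, that $0$ remains an eigenvalue with eigenfunction $\phi'$ for all $t$ (needs verification that $(\phi',\phi'')_{L^2}=0$, which holds because $\phi'\phi'' = \frac12(\phi'^2)'$ integrates to $0$), and that no eigenvalue can cross $0$ or escape into the essential spectrum. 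The main obstacle is precisely this last point: controlling the negative eigenvalue count of $\Lum$ under the nonlocal perturbation — ruling out that a second negative eigenvalue is created or that the simple structure at $0$ is destroyed. I expect to resolve it by showing $\langle \Lum g, g\rangle \geq (1+\|\phi'\|^2)\langle \widetilde{\mathcal{L}} g, g\rangle - 2|(\phi',g')_{L^2}|\,|(\phi'',g)_{L^2}|$ and combining with the spectral gap of $\widetilde{\mathcal L}$, or alternatively by noting that on $\{\phi'\}^\perp$ the quadratic form of $\widetilde{\mathcal L}$ is positive definite with a gap and the rank-one perturbation can lower at most one eigenvalue below zero — which is the one associated with the positive even eigenfunction inherited from $\widetilde{\mathcal{L}}$, so exactly one negative eigenvalue survives. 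Positivity of the associated eigenfunction would then follow from it being the ground state of a Schrödinger-type form (Perron–Frobenius).
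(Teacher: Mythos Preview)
Your decomposition of $\Lum$ into the local Schr\"odinger operator (the paper calls it $\mathcal{L}_1 = (1+\|\phi'\|_{L^2}^2)\widetilde{\mathcal{L}}$) plus the rank-one nonlocal term is exactly the paper's starting point, and your Sturm--Liouville analysis of $\widetilde{\mathcal{L}}$ is correct. But you miss the single observation that dissolves what you call ``the main obstacle'': integrate by parts to get $(\phi'',P)_{L^2} = -(\phi',P')_{L^2}$, so the quadratic form of the nonlocal term is
\[
-2(\phi',P')_{L^2}\,(\phi'',P)_{L^2} \;=\; +\,2(\phi',P')_{L^2}^2 \;\ge\; 0.
\]
Hence the paper's identity
\[
(\Lum P,P)_{L^2} = (\mathcal{L}_1 P,P)_{L^2} + 2(\phi',P')_{L^2}^2,
\]
which shows the perturbation is \emph{nonnegative}, not indefinite. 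By min--max every eigenvalue of $\Lum$ dominates the corresponding eigenvalue of $\mathcal{L}_1$; combined with the direct computation $(\Lum\phi,\phi)_{L^2}<0$ (so at least one negative eigenvalue) and $\Lum\phi'=0$ (you already checked $(\phi',\phi'')_{L^2}=0$), this gives at once: one simple negative eigenvalue, zero simple with eigenfunction $\phi'$, the rest strictly positive.

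Your proposal instead treats the rank-one term as sign-indefinite --- your inequality $\langle \Lum g,g\rangle \ge \langle \mathcal{L}_1 g,g\rangle - 2|(\phi',g')|\,|(\phi'',g)|$ throws away exactly the sign that makes the argument work --- and then reaches for a homotopy $\Lum^{(t)}$ or an even/odd splitting, neither of which you complete. The homotopy route \emph{could} be salvaged, but only by proving the nonnegativity you missed (otherwise nothing prevents a second eigenvalue from crossing zero). Your final remark that ``the rank-one perturbation can lower at most one eigenvalue'' has the direction backwards: here the perturbation can only \emph{raise} eigenvalues, which is precisely why the negative-eigenvalue count cannot exceed that of $\mathcal{L}_1$.
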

\begin{proof}
The second part of the proposition can be established using Weyl's Criterium in \cite[Theorem B.48]{an} and we are going to prove the first part for the case $r=1$ since the cases $r=2$ and $r=4$ are quite similar. Indeed, we obtain by the explicit profile in $(\ref{solitary})$ and some additional calculations that
$$(\Lum \phi,\phi)_{L^2}=-2\int_{\mathbb{R}}\phi^4dx+2\left(\int_{\mathbb{R}}\phi'^2dx\right)^2<0.$$
In other words, $\Lum$ has at least one negative eigenvalue. Next, for any $P\in
H^2(\mathbb{R})$ we have
\begin{equation}\label{quadr1}
(\Lum P,P)_{L^2}=(\mathcal{L}_{1}P,P)_{L^2}
+2(\phi',\partial_xP)_{L^2}^2,
\end{equation}
where $\mathcal{L}_1=
-\left(1+\int_{\mathbb{R}}\phi'^2dx\right)\partial_x^2+\omega-3\phi^2$. Since $1+\int_{\mathbb{R}}\phi'^2dx$ is a positive constant, $\phi'$ has only one zero over $\mathbb{R}$ and $\Lum\phi'=\mathcal{L}_1\phi'=0$, we see from the classical Sturm-Liouville Theory that $\mathcal{L}_1$ has only one negative eigenvalue which is simple and zero is a simple eigenvalue with associated eigenfunction $\phi'$. Moreover, the remainder of the spectrum of $\mathcal{L}_1$ is bounded away from zero.\\
\indent Let $\chi_1$ be the eigenfunction associated to the unique
negative eigenvalue  of $\mathcal{L}_{1}$. Then, if $g\bot\chi_1$, it follows that
$(\mathcal{L}_{1}g,g)_{L^2}\geq0$. Therefore, if
$\lambda$ denotes the second eigenvalue of
$\Lum$, from the Min--Max Principle (see
\cite[Theorem XIII.1]{RS}), we obtain using $(\ref{quadr1})$
$$\displaystyle\lambda=\max_{\chi}\min_{g\bot[\chi],||g||=1}(\Lum g,g)_{L^2}
\geq\min_{g\bot[\chi_1], ||g||=1}(\mathcal{L}_1 g,g)_{L^2}\geq0.$$
 This proves that the first eigenvalue of $\Lum$ is negative and simple and zero is the second eigenvalue. To prove that $0$ is also simple, we use a similar analysis  as above to prove that the third eigenvalue of $\Lum$ is positive.
\end{proof}
\indent Concerning $\Ldois$, we have the following result.

\begin{proposition}\label{specL2}
The operator $\Ldois$ in \eqref{L2} defined on $L^2(\R)$ with domain
$H^2(\R)$ has no negative eigenvalue. The eigenvalue zero is simple with
associated eigenfunction $\phi$. Moreover the rest of the spectrum is bounded
away from zero and the essential spectrum is the interval $\left[\frac{\omega}{1+\int_{\mathbb{R}}\phi'^2dx},+\infty\right)$.
\end{proposition}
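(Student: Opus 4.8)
The plan is to exploit the identity \eqref{soleq}, which says precisely that $\Ldois\phi=0$, together with the positivity of the profile $\phi$ in \eqref{solitary}. First I would record that $\Ldois$ in \eqref{L2} is self-adjoint on $L^2(\R)$ with domain $H^2(\R)$: since $1+\int_{\R}\phi'^2dx$ is a fixed positive constant once $\phi$ is fixed, $\Ldois$ is just $-\left(1+\int_{\R}\phi'^2dx\right)\partial_x^2+\omega$ perturbed by multiplication by the bounded, decaying function $-\phi^{2r}$. Because $\phi^{2r}(x)\to 0$ as $|x|\to\infty$, Weyl's criterion (as invoked in the proof of Proposition \ref{specL1}, cf. \cite[Theorem B.48]{an}) gives that the essential spectrum of $\Ldois$ coincides with that of $-\left(1+\int_{\R}\phi'^2dx\right)\partial_x^2+\omega$, namely the interval $\left[\frac{\omega}{1+\int_{\R}\phi'^2dx},+\infty\right)$; in particular it is bounded away from $0$ since $\omega>0$.

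Next I would analyze the discrete spectrum lying below $\frac{\omega}{1+\int_{\R}\phi'^2dx}$. From \eqref{soleq} we read off $\Ldois\phi=0$, so $0$ is an eigenvalue with eigenfunction $\phi$. Since $\phi(x)=a\sech^{\frac{1}{r}}(bx)>0$ has no zeros on $\R$, the classical Sturm--Liouville oscillation theory (applied after dividing the eigenvalue equation by the positive constant $1+\int_{\R}\phi'^2dx$) forces $\phi$ to be the eigenfunction associated with the \emph{lowest} eigenvalue of $\Ldois$, and that lowest eigenvalue is simple. Consequently the lowest eigenvalue is exactly $0$, it is simple, and $\Ldois$ has no negative eigenvalue. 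Moreover, the second eigenvalue, if it exists below the essential spectrum, is then strictly positive; combined with the fact that the essential spectrum starts at $\frac{\omega}{1+\int_{\R}\phi'^2dx}>0$, this shows the remainder of the spectrum is bounded away from $0$.

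I do not anticipate any real obstacle here: in contrast with Proposition \ref{specL1}, the operator $\Ldois$ carries no cross term $-2(\phi',\partial_x\cdot)_{L^2}\phi''$, so there is no need for a Min--Max comparison argument — the Sturm--Liouville picture applies directly. The only points deserving a line of care are (a) that the constant $1+\int_{\R}\phi'^2dx$ is genuinely a fixed positive number for the fixed wave $\phi$, so it can be absorbed harmlessly, and (b) that the oscillation theorem is being used for a Schr\"odinger operator on the whole line with potential converging to a constant, which is standard.
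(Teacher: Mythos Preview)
Your proposal is correct and follows exactly the same approach as the paper: observe $\Ldois\phi=0$ from \eqref{soleq}, note that $\phi>0$, and invoke Sturm--Liouville theory to conclude that $0$ is the simple first eigenvalue, with Weyl's criterion handling the essential spectrum. The paper's own proof is a single sentence to this effect; your write-up is simply a more detailed execution of the same argument.
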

\begin{proof}
	Since $\phi$ is positive and $\Ldois\phi=0$, we see directly from the Sturm-Liouville Theory that $0$ is the first eigenvalue of $\Ldois$ and it is result to be simple.
\end{proof}

We finish this subsection by stating the spectral properties of the ``linearized'' operator $\mathcal{L}$. Indeed, a combination of Propositions \ref{specL1} and \ref{specL2} gives us the following.

\begin{theorem}\label{specLline}
The operator $\mathcal{L}$ in \eqref{L} defined on $\Ll^2$
with domain $\Hh^2$ has a unique negative eigenvalue, which
is simple. The eigenvalue zero is double with associated eigenfunctions
$(\phi',0)$ and $(0,\phi)$. Moreover the essential spectrum is the interval
$\left[\frac{\omega}{1+\int_{\mathbb{R}}\phi'^2dx},+\infty\right)$.
\end{theorem}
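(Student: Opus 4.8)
The plan is to reduce the statement to the block-diagonal structure of $\mathcal{L}$ in \eqref{L} together with the spectral information already obtained in Propositions \ref{specL1} and \ref{specL2}. Since $\mathcal{L}=\mathrm{diag}(\Lum,\Ldois)$ acts on $\Ll^2$ with domain $\Hh^2$, a pair $(P,Q)\in\Hh^2$ satisfies $\mathcal{L}(P,Q)=\lambda(P,Q)$ if and only if $\Lum P=\lambda P$ and $\Ldois Q=\lambda Q$. Consequently $\sigma(\mathcal{L})=\sigma(\Lum)\cup\sigma(\Ldois)$, the analogous identity holds for the point and essential spectra, and the multiplicity of an eigenvalue of $\mathcal{L}$ equals the sum of its multiplicities as an eigenvalue of $\Lum$ and of $\Ldois$ (with the convention that the contribution is $0$ when the value is not in the corresponding spectrum). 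The whole argument is then bookkeeping.

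First I would handle the negative spectrum. By Proposition \ref{specL2} the operator $\Ldois$ has no negative eigenvalue and its essential spectrum starts at $\omega/(1+\int_{\R}\phi'^2\,dx)>0$, so every negative point of $\sigma(\mathcal{L})$ must come from $\Lum$; by Proposition \ref{specL1} there is exactly one, it is simple, and the corresponding eigenfunction of $\mathcal{L}$ is $(\chi,0)$, where $\chi>0$ is the ground state of $\Lum$. Next I would identify the kernel: zero is a simple eigenvalue of $\Lum$ with eigenfunction $\phi'$ and a simple eigenvalue of $\Ldois$ with eigenfunction $\phi$, so $\ker\mathcal{L}=\mathrm{span}\{(\phi',0),(0,\phi)\}$ is two-dimensional, i.e.\ $0$ is a double eigenvalue of $\mathcal{L}$ with the stated eigenfunctions.

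Finally I would treat the remainder of the spectrum: since in both propositions the rest of the spectrum is bounded away from $0$ and the essential spectrum equals $[\omega/(1+\int_{\R}\phi'^2\,dx),+\infty)$, taking the union yields $\sigma_{ess}(\mathcal{L})=[\omega/(1+\int_{\R}\phi'^2\,dx),+\infty)$ and shows that $\sigma(\mathcal{L})\setminus\{\lambda_0,0\}$ stays away from $0$, where $\lambda_0<0$ denotes the unique negative eigenvalue. I do not expect a genuine obstacle here; the only point deserving a line of care is that no sequence of eigenvalues of either diagonal entry accumulates at $0$, which is precisely the ``bounded away from zero'' clause already established in Propositions \ref{specL1} and \ref{specL2}.
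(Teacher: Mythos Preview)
Your proposal is correct and follows essentially the same approach as the paper: the authors' own proof simply states that the result is an immediate consequence of Propositions~\ref{specL1} and~\ref{specL2}, i.e., exactly the block-diagonal reduction you carry out. You have merely made the bookkeeping explicit.
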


\begin{proof}
	The proof of this result is an immediate consequence of Propositions $\ref{specL1}$ and $\ref{specL2}$.
\end{proof}

\subsection{The spectrum of $\Lum$ and $\Ldois$ - case $\mathbb{B}=\mathbb{T}$.}\label{spectcaseT}

We start this subsection by establishing some basic facts on the Floquet theory. In general setting, we consider the second order ordinary differential equation as
\begin{equation}
- \psi'' + g(\omega,\psi) = 0, \label{ode} \end{equation}
where $g$ is a smooth function in all variables. We assume that the parameter $\omega$ belongs to an open set
$\mathcal{P} \subset\mathbb{R}$.\\
\indent Now, let $\mathcal{D}$ be the linearized equation of
(\ref{ode}) at $\psi$, where $\psi$ is an $2\pi-$periodic
solution of (\ref{ode}). The linearized operator  around $\psi$
\begin{equation}
\mathcal{D}y = - y'' + g'(\omega, \psi)\, y,
\;\;\; \omega \in \mathcal{P} \label{hill} \end{equation} is a Hill
operator, therefore, according to \cite{magnus},
the spectrum of $\mathcal{D}$ is formed by an unbounded
sequence of real numbers
$$
\lambda_0<\lambda_1\leq \lambda_2 < \lambda_3\leq \lambda_4\cdot\cdot\cdot <\lambda_{2n-1}\leq \lambda_{2n}<\cdot\cdot\cdot,
$$
where equality means that $\lambda_{2n-1}=\lambda_{2n}$ is a double eigenvalue.

\indent According with the \textit{Oscillation Theorem} in \cite{magnus}, the spectrum of $\mathcal{D}$ is
characterized by the number of zeros of the eigenfunctions. In fact, if $h$
is an eigenfunction associated to the eigenvalue $\lambda_{2n-1}$ or
$\lambda_{2n}$, then $h$  has exactly $2n$ zeros in the half-open
interval $[0, \; 2\pi)$. \\
\indent Let $\{y_1,y_2\}$ be a fundamental set related to the Hill equation
\begin{equation}\label{fundset}
- y'' + g'(\omega, \psi)=0.
\end{equation}
\indent Suppose that $y_1$ is an $2\pi-$periodic solution of $(\ref{fundset})$. The arguments in \cite{magnus} establish a connection between $y_1$ and $y_2$ through the equality,
\begin{equation}\label{relpq}
y_2(x+2\pi)=y_2(x)+\theta y_1(x),
\end{equation}
where $\theta$ is a real constant.\\
\indent Next result gives us a necessary and sufficient condition to decide about the periodicity of $y_2$ and a sufficient condition to know the exact position of the zero eigenvalue associated to $\mathcal{D}$ when $\theta\neq0$.
\begin{theorem}\label{teo12}
	Let $y_1$ be the eigenfunction of $\mathcal{D}$ in $(\ref{hill})$ associated to the zero eigenvalue, and $\theta$ is
	the constant given by $(\ref{relpq})$. The zero eigenvalue  is simple if and only if $\theta \neq 0$. Moreover, if $y_1$
	has $2n$ zeros in the half-open interval $[0,2\pi)$, then $\lambda_{2n-1}=0$ if $\theta < 0$, and $ \lambda_{2n}=0$ if $\theta > 0$.
\end{theorem}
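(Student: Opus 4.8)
The plan is to reduce both assertions to classical facts about Hill's discriminant for the spectrally shifted family obtained from $(\ref{hill})$, namely $-y''+(q-\lambda)y=0$ with $q:=g'(\omega,\psi)$.

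For the simplicity statement I would argue directly from the fundamental system. If $\theta=0$, then by $(\ref{relpq})$ the solution $y_2$ is itself $2\pi$-periodic, so $y_1$ and $y_2$ are two linearly independent periodic null solutions of $\mathcal{D}$ and $0$ is a double eigenvalue. Conversely, if $\theta\neq0$ and $z$ is any $2\pi$-periodic solution of $\mathcal{D}z=0$, write $z=\alpha y_1+\beta y_2$; then $(\ref{relpq})$ gives $z(x+2\pi)-z(x)=\beta\theta\,y_1(x)$, and periodicity of $z$ forces $\beta\theta=0$, hence $\beta=0$. So the kernel of $\mathcal{D}$ (acting on $2\pi$-periodic functions) is spanned by $y_1$ alone, i.e. $0$ is simple.

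For the location of the eigenvalue, let $\Delta(\lambda)$ denote the discriminant (the trace of the monodromy matrix over one period) of $-y''+(q-\lambda)y=0$. Recall from \cite{magnus} that $\Delta$ is real-analytic, that the periodic eigenvalues $\lambda_0<\lambda_1\le\lambda_2<\cdots$ are precisely the solutions of $\Delta(\lambda)=2$, and that for a pair $\lambda_{2n-1}<\lambda_{2n}$ one has $\Delta(\lambda)>2$ on the open interval $(\lambda_{2n-1},\lambda_{2n})$ while $\Delta(\lambda)<2$ for $\lambda$ immediately below $\lambda_{2n-1}$ and immediately above $\lambda_{2n}$. Since $y_1$ is a $2\pi$-periodic null solution of $\mathcal{D}$, we have $\Delta(0)=2$, so $0$ is a periodic eigenvalue; by the Oscillation Theorem and the hypothesis that $y_1$ has exactly $2n$ zeros in $[0,2\pi)$, it equals $\lambda_{2n-1}$ or $\lambda_{2n}$. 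When $\theta\neq0$ this eigenvalue is simple, so $\lambda_{2n-1}<\lambda_{2n}$ strictly, and it then suffices to determine on which side of $0$ the set $\{\lambda:\Delta(\lambda)>2\}$ lies: if $\Delta'(0)>0$ then $\Delta<2$ just below $0$ and $\Delta>2$ just above, which by the description above forces $0=\lambda_{2n-1}$; if $\Delta'(0)<0$ the reverse holds and $0=\lambda_{2n}$.

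It remains to link the sign of $\Delta'(0)$ to that of $\theta$. Writing $(\ref{hill})$ with spectral parameter in first-order form $Y'=A(x,\lambda)Y$, $Y(0,\lambda)=\mathrm{Id}$, with $A=\left(\begin{smallmatrix}0&1\\q-\lambda&0\end{smallmatrix}\right)$, one has $M(\lambda)=Y(2\pi,\lambda)$, $\Delta=\operatorname{tr}M$, and by the variation-of-constants formula $\partial_\lambda Y(2\pi,\lambda)=Y(2\pi,\lambda)\int_0^{2\pi}Y(t,\lambda)^{-1}\,\partial_\lambda A\,Y(t,\lambda)\,dt$ with $\partial_\lambda A=\left(\begin{smallmatrix}0&0\\-1&0\end{smallmatrix}\right)$. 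Passing at $\lambda=0$ to the Floquet basis $\{y_1,y_2\}$, whose one-period transition matrix is $\left(\begin{smallmatrix}1&\theta\\0&1\end{smallmatrix}\right)$ by $(\ref{relpq})$, and using the cyclic invariance of the trace, the $2\times2$ computation collapses to
\[
\Delta'(0)=-\frac{\theta}{W}\int_0^{2\pi}y_1(x)^2\,dx,
\]
where $W$ is the (constant) Wronskian of $\{y_1,y_2\}$, which we normalize to be positive. Since $\int_0^{2\pi}y_1^2\,dx>0$, the sign of $\Delta'(0)$ is opposite to that of $\theta$, and combining with the previous paragraph gives $\theta<0\Leftrightarrow\Delta'(0)>0\Leftrightarrow 0=\lambda_{2n-1}$ and $\theta>0\Leftrightarrow\Delta'(0)<0\Leftrightarrow 0=\lambda_{2n}$, which is the assertion. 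The step I expect to demand the most care is precisely this last identity: fixing the normalization of $\{y_1,y_2\}$ so that the sign of $\theta$ in $(\ref{relpq})$ is unambiguous, and carrying the $\lambda$-derivative of the solution operator through the change of basis without a sign error; once $\Delta'(0)=-\theta W^{-1}\int_0^{2\pi}y_1^2\,dx$ is in hand, the remainder is a direct appeal to the Hill-theory facts recalled above.
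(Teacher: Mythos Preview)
Your argument is correct. The simplicity statement follows exactly as you say from the relation $(\ref{relpq})$, and your computation of $\Delta'(0)$ is accurate: carrying out the variation-of-constants integral in the Floquet basis $\{y_1,y_2\}$ one indeed obtains
\[
\Delta'(0)=-\frac{\theta}{W}\int_0^{2\pi}y_1(x)^2\,dx,
\]
and combining this with the standard behaviour of $\Delta$ near the periodic eigenvalues (namely $\Delta>2$ on each open gap $(\lambda_{2n-1},\lambda_{2n})$ and $\Delta<2$ immediately outside) pins down whether $0$ equals $\lambda_{2n-1}$ or $\lambda_{2n}$. Your caveat about normalizing $W>0$ so that the sign of $\theta$ in $(\ref{relpq})$ is unambiguous is the right one; note that in the application (Proposition~\ref{propRe}) the choice $\bar{y}(0)=-1/\phi''(0)$, $\bar{y}'(0)=0$ gives precisely $W=1$.

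As for the comparison: the paper does not give a proof here at all---it simply refers to \cite{nn}. So there is nothing in the present text to compare your argument against. What you have written is a self-contained discriminant-based proof, which is in fact very close in spirit to the argument in \cite{nn}; that reference also reduces the question to the sign of $\theta$ via the monodromy/discriminant of the associated Hill operator. In that sense your route is not genuinely different from the one the paper invokes, you have just supplied the details that the paper outsources.
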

\begin{proof}
	See \cite{nn}.
\end{proof}

We have similar results as established by Propositions $\ref{specL1}$ and $\ref{specL2}$ in the periodic context.

\begin{proposition}   \label{propRe}
	 Operator $\Lum$ in $(\ref{L1})$
		defined in $L^2(\mathbb{T})$ with domain
		$H^2(\mathbb{T})$ has  its first
		two eigenvalues simple, being the eigenvalue zero the second one
		with eigenfunction $\phi'$. Moreover, the remainder of the spectrum is constituted by a discrete
		set of eigenvalues.
\end{proposition}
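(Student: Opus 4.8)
The plan is to realize $\Lum$ as a nonnegative rank-one perturbation of an honest Hill operator and then combine Floquet theory with the min--max principle, exactly in the spirit of Proposition \ref{specL1}. Since $c:=1+\int_{\mathbb{T}}\phi'^2\,dx$ is a positive constant, introduce the Hill operator
\[
\mathcal{L}_1:=-c\,\partial_x^2+\omega-(2r+1)\phi^{2r},
\]
with smooth $2\pi$-periodic potential. Integrating by parts one checks that $(\Lum P,P)_{L^2}=(\mathcal{L}_1P,P)_{L^2}+2(\phi',\partial_xP)_{L^2}^2$ for all $P\in H^2(\mathbb{T})$, so $\Lum$ differs from $\mathcal{L}_1$ by the nonnegative rank-one form $2(\phi',\partial_x\cdot)_{L^2}^2$. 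Differentiating $(\ref{soleq})$ gives $\mathcal{L}_1\phi'=0$, and since $(\phi',\partial_x\phi')_{L^2}=\tfrac12\int_{\mathbb{T}}(\phi'^2)'\,dx=0$ we also get $\Lum\phi'=0$; hence $0$ is an eigenvalue of both operators with eigenfunction $\phi'$.

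Next I would describe the spectrum of $\mathcal{L}_1$ via the Floquet tools of Subsection \ref{spectcaseT}. For the profiles $(\ref{per1})$ and $(\ref{per2})$ the function $\phi$ has one crest and one trough per period, so $\phi'$ has exactly two zeros in $[0,2\pi)$; by the Oscillation Theorem this forces $0\in\{\lambda_1,\lambda_2\}$ in the periodic spectrum $\lambda_0<\lambda_1\le\lambda_2<\cdots$ of $\mathcal{L}_1$. Testing with $\phi>0$ and using $(\ref{soleq})$ yields $(\mathcal{L}_1\phi,\phi)_{L^2}=-2r\int_{\mathbb{T}}\phi^{2r+2}\,dx<0$, so $\lambda_0<0$. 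To locate $0$ precisely, I would either apply Theorem \ref{teo12} — computing, by reduction of order and the closed forms of the parameters (for $r=2$ also $\alpha$ in $(\ref{alphak})$), the second, non-periodic solution $y_2$ of $\mathcal{L}_1y=0$ and showing that the constant $\theta$ in $(\ref{relpq})$ is negative, so that $\lambda_1=0$ — or, more directly, identify $\mathcal{L}_1$ after rescaling with a known operator: for $r=1$ the relation $a^2=2cb^2$ turns $\mathcal{L}_1/(cb^2)$ into the Lam\'e operator $-\partial_y^2+6k^2\sn^2(y,k)$ shifted by a constant, whose band edges and eigenfunctions are explicit and place $\phi'$ (proportional to $\sn\cn$) precisely at $\lambda_1=0$ (this is, up to scaling, the operator analyzed in \cite{angulo1}); for $r=2$ one invokes the corresponding spectral picture from \cite{AN2}. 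Either way one obtains $\lambda_0<\lambda_1=0<\lambda_2$: $\mathcal{L}_1$ has exactly one negative eigenvalue and $0$ is a simple one with eigenfunction $\phi'$. This elliptic-function step is the main obstacle.

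Finally I would transfer this information to $\Lum$. A nonnegative rank-one perturbation can only raise eigenvalues and can lower the number of negative ones by at most one, so $\Lum$ has at most one negative eigenvalue; and since $\Lum$ is a bounded perturbation of $\mathcal{L}_1$ on the compact torus, its spectrum remains a discrete sequence of eigenvalues diverging to $+\infty$. Writing $\chi_0$ for the $\lambda_0$-eigenfunction of $\mathcal{L}_1$, for $g\perp\chi_0$ one has $(\mathcal{L}_1g,g)_{L^2}\ge0$, hence $(\Lum g,g)_{L^2}\ge0$, so by the min--max principle the second eigenvalue $\mu_2$ of $\Lum$ satisfies $\mu_2\ge0$; if in addition $g\perp\phi'$ then $(\mathcal{L}_1g,g)_{L^2}\ge\lambda_2\|g\|_{L^2}^2$, whence $(\Lum g,g)_{L^2}\ge\lambda_2\|g\|_{L^2}^2>0$ and the third eigenvalue of $\Lum$ is strictly positive. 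On the other hand, a direct computation with the explicit profile gives $(\Lum\phi,\phi)_{L^2}=-2r\int_{\mathbb{T}}\phi^{2r+2}\,dx+2\big(\int_{\mathbb{T}}\phi'^2\,dx\big)^2<0$ (as in the case $\mathbb{B}=\R$), so $\Lum$ has at least one negative eigenvalue. Combining these facts, the first eigenvalue of $\Lum$ is negative and simple, the second is $0$ and simple with eigenfunction $\phi'$, and the remainder of the spectrum is a discrete set of positive eigenvalues, which is the assertion.
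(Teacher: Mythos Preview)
Your argument follows the same skeleton as the paper's: decompose $\Lum=\mathcal{L}_1+2(\phi',\partial_x\cdot)_{L^2}^2$, analyze the Hill operator $\mathcal{L}_1$, then transfer to $\Lum$ via the min--max principle exactly as in Proposition \ref{specL1}. The difference lies in the one hard step, namely showing that for $\mathcal{L}_1$ one has $\lambda_1=0$ (simple) rather than $\lambda_2=0$. The paper does this via Theorem \ref{teo12}: it solves the initial value problem (\ref{y}) numerically at a single modulus $k_0$, reads off $\theta<0$, and then invokes the isoinertial property of the family $\omega\mapsto\mathcal{L}_1$ to propagate this to all $k$. Your alternative --- rescaling $\mathcal{L}_1/(cb^2)$ to the two-gap Lam\'e operator (using $a^2=2cb^2$, which is indeed what the profile relations give) and quoting the explicit band structure from \cite{angulo1} for $r=1$, and the analogous reduction from \cite{AN2} for $r=2$ --- is fully analytic and arguably cleaner; it avoids both the numerics and the isoinertial continuation argument. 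Either route is valid.

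One point you should not wave away: the inequality $(\Lum\phi,\phi)_{L^2}=-2r\int_{\mathbb{T}}\phi^{2r+2}\,dx+2\big(\int_{\mathbb{T}}\phi'^2\,dx\big)^2<0$ is not automatic on $\mathbb{T}$ the way it is on $\mathbb{R}$, because the second term need not be dominated by the first uniformly in $k$. The paper checks this by plotting the resulting function of $k$ (their $\tau(k)$ and $\gamma(k)$) and observing it is negative on the relevant range; you should either do the same or give an analytic bound.
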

\begin{proof}
	The proof is very similar to the proof of Proposition $\ref{specL1}$. The main difference is that we need to use Theorem $\ref{teo12}$ instead of the Sturm-Liouville Theory in order to obtain the behavior of the first two eigenvalues of $\mathcal{L}_1$. As usual, the second part of the proposition can be established using Weyl's Criterium in \cite[Theorem B.48]{an} and we are going to prove the first part for the case $r=1$.
	
	First of all, we have that
\begin{eqnarray}\label{normder}\displaystyle\int_{\mathbb{T}}\phi'^2dx&=&2a^2bk^4\displaystyle\int_0^{K(k)}\textrm{cn}^2(x,k)\ \textrm{sn}^2(x,k)\ dx\nonumber\\
	&=&\displaystyle\frac{-8(1-k^2)K(k)^4+4(2-k^2)E(k)K(k)^3}{8(1-k^2)K(k)^4-4(2-k^2)E(k)K(k)^3+3\pi^3}=:\tau_1(k),\end{eqnarray}
	
	and
	
	\begin{equation}\label{IntPhi4}
\displaystyle\int_{\mathbb{T}}\phi^4 dx=\displaystyle\frac{24\pi^3K(k)^3[2(2-k^2)E(k)-(1-k^2)K(k)]}{[8(1-k^2)K(k)^4-4(2-k^2)E(k)K(k)^3+3\pi^3]^2}=:\tau_2(k).	
	\end{equation}

	So, by using (\ref{normder}) and (\ref{IntPhi4}), we obtain
\begin{eqnarray*}(\Lum \phi,\phi)_{L^2}=-2\int_{\mathbb{T}}\phi^4dx+2\left(\int_{\mathbb{T}}\phi'^2dx\right)^2=-2\tau_2(k)+2[\tau_1(k)]^2=:\tau(k).\end{eqnarray*}	

The picture below shows us that $\tau(k)<0$, for all $k\in (0,k_{*})$:

\begin{figure}[!htb]
	\includegraphics[scale=0.35]{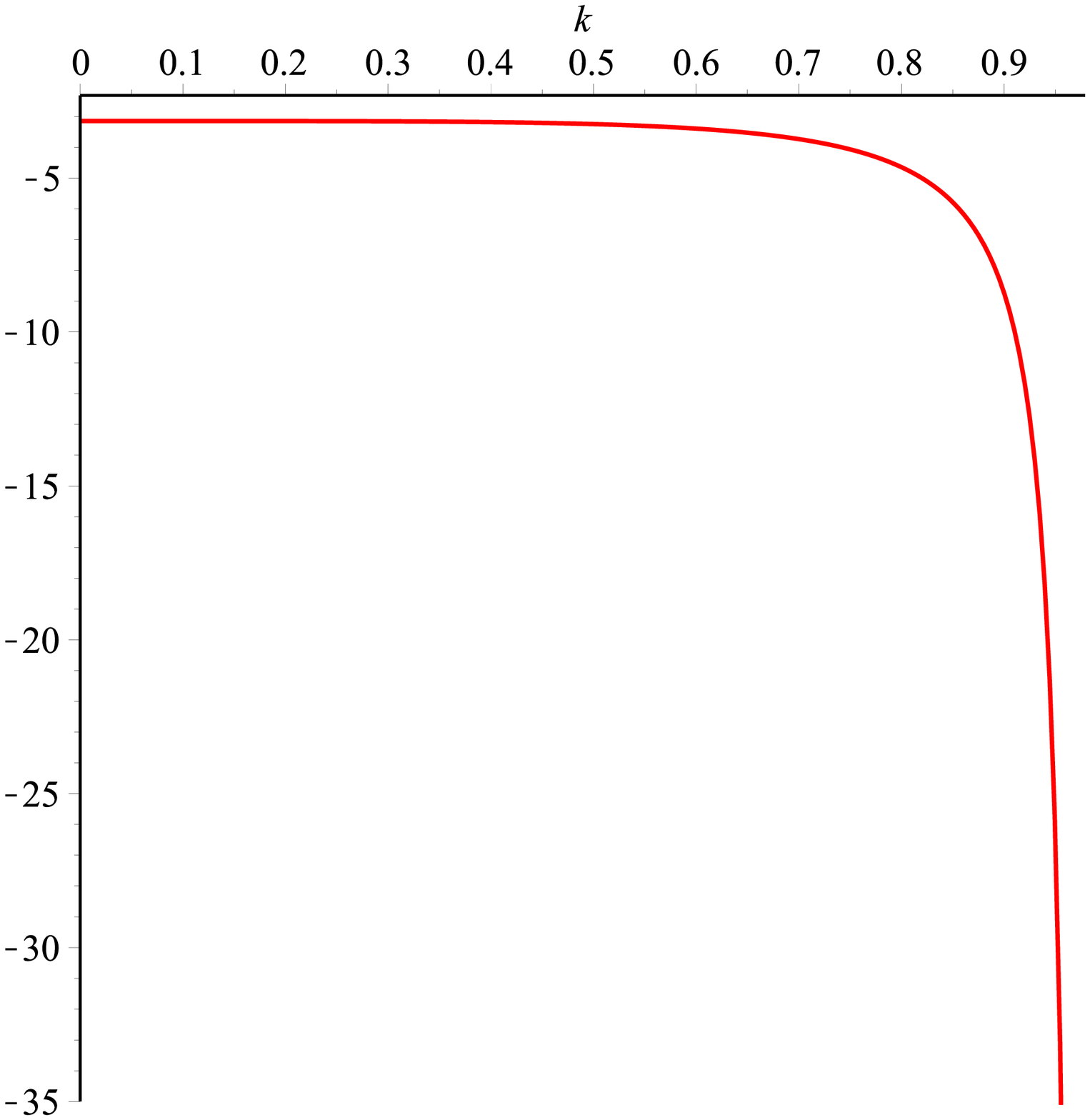}

\end{figure}
		
Thus, we obtain
	$$(\Lum \phi,\phi)_{L^2}<0,$$
	that is, $\Lum$ has at least one negative eigenvalue. On the other hand, for any $P\in
	H^2(\mathbb{T})$, we have
	\begin{equation}\label{quadr12}
	(\Lum P,P)_{L^2}=(\mathcal{L}_{1}P,P)_{L^2}
	+2(\phi',\partial_xP)_{L^2}^2,
	\end{equation}
	where $\mathcal{L}_1=
	-\left(1+\int_{\mathbb{R}}\phi'^2dx\right)\partial_x^2+\omega-3\phi^2$. Since $1+\int_{\mathbb{R}}\phi'^2dx$ is a positive constant, $\phi'$ has two zeroes over $\mathbb{T}$ and $\Lum\phi'=\mathcal{L}_1\phi'=0$, we need to apply Theorem $\ref{teo12}$ to decide about the position and the simplicity of the zero eigenvalue. In fact, first we see that the Oscillation Theorem in \cite{magnus} gives us that $0$ can be the second or third eigenvalue of $\mathcal{L}_1$. To determine the exact position, we need to solve the following initial value problem
	
\begin{equation}	\left\{
	\begin{array}{l}
		\displaystyle- \bar{y}'' + \frac{\omega}{1+\int_{\mathbb{T}}\phi'^2dx} \bar{y}-\frac{3\phi^2}{1+\int_{\mathbb{T}}\phi'^2dx}\bar{y} = 0 \\\\
		\bar{y}(0) = - \frac{1}{\phi''(0)} \\\\
		\bar{y}'(0)=0,
	\end{array} \right.
	\label{y} \end{equation}
	where $\{\phi',\bar{y}\}$ is the fundamental set associated to the equation $\mathcal{L}_1 y=0$. Since $\phi'$ is odd and $\bar{y}$ is even, the value of $\theta$ in Theorem $\ref{teo12}$ is given by
	\begin{equation}
	 \theta=
	\frac{ \bar{y}'(2\pi)}{\phi''(0)}.
	\end{equation}
	
	We can solve numerically the initial value problem in $(\ref{y})$ for a fixed  value for $\omega$ in the parameter regime. For instance, if $k_0=0.5$ one can see that $\omega_0=\omega(k_0)\approx 0.508$ and $\theta=-18.7569<0$. Since $\mathcal{L}_1$ is isoinertial with respect to $\omega$ (that is, the quantity and multiplicity of non-positive eigenvalues of $\mathcal{L}_1$ remains constant with $\omega$ in the parameter regime of solutions), we deduce from the arguments in \cite{nn} that $n(\mathcal{L}_1)=1$ and $\ker(\mathcal{L}_1)=[\phi']$, where $n(\mathcal{A})$ indicates the number of negative eigenvalues of a linear operator $\mathcal{A}$.\\
	\indent It remains to consider the case $r=2$. The procedure is similar as in the case $r=1$. Initially, we define a new function in terms of the modulus $k$,
	$$(\Lum \phi,\phi)_{L^2}=-4\displaystyle\int_{\mathbb{T}}\phi^6 dx+2\displaystyle\left(\displaystyle\int_{\mathbb{T}}\phi'^2 dx\right)^2:=\gamma(k).$$ In order to simplify the notation, we omit the (heavy) explicit expression for $\gamma$ in terms of $k$, but we present a picture to describe  its behavior.
\begin{figure}[!htb]
	\includegraphics[scale=0.35]{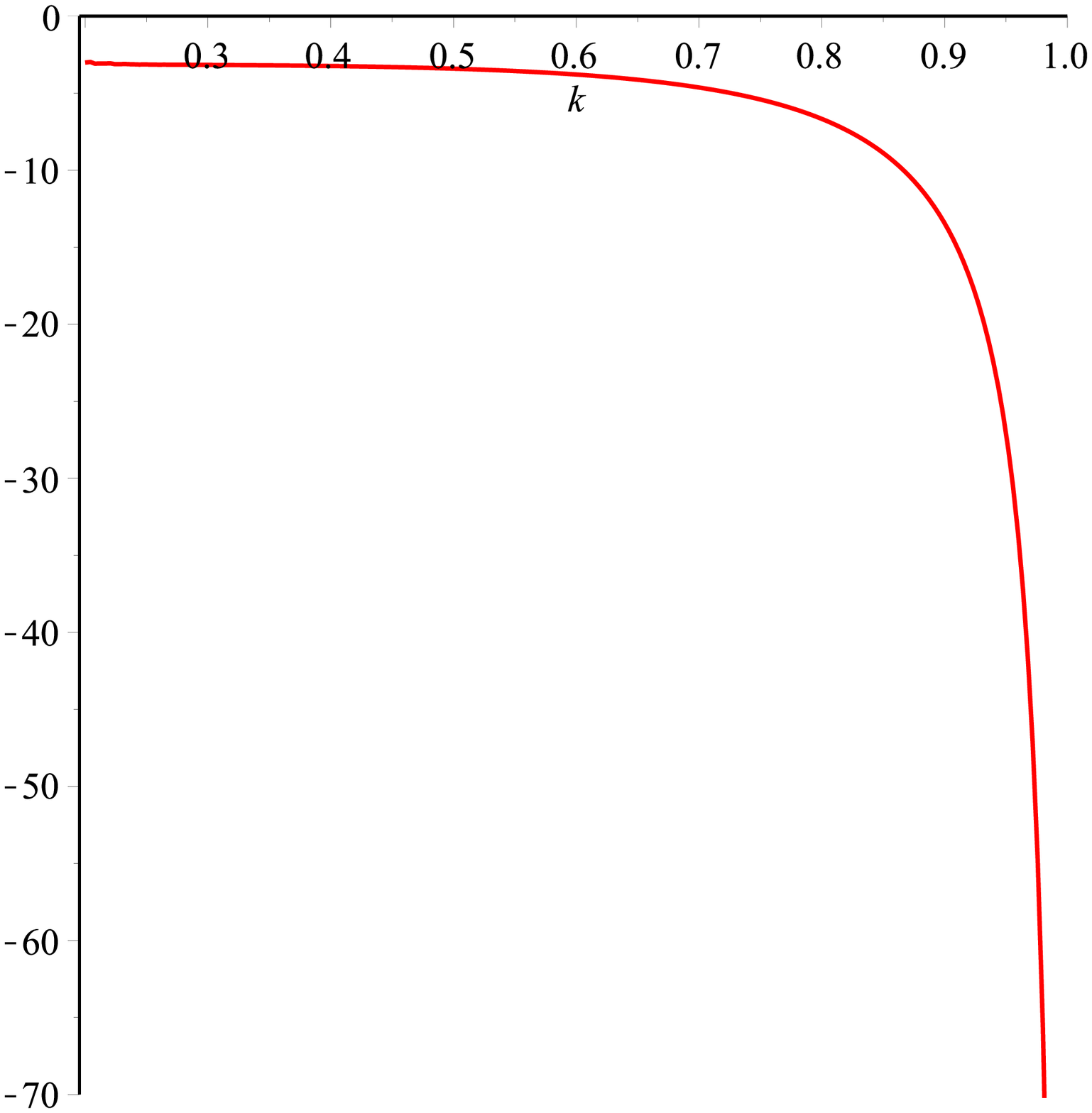}

\end{figure}
	\newpage
	So,
	$$(\Lum \phi,\phi)_{L^2}<0,$$
	and $\Lum$ has at least one negative eigenvalue. To determine the exact position of the zero eigenvalue, we need to solve a similar initial value problem $(\ref{y})$ given by
	
	\begin{equation}	\left\{
	\begin{array}{l}
	\displaystyle- \bar{y}'' + \frac{\omega}{1+\int_{\mathbb{T}}\phi'^2dx} \bar{y}-\frac{5\phi^4}{1+\int_{\mathbb{T}}\phi'^2dx}\bar{y} = 0 \\\\
	\bar{y}(0) = - \frac{1}{\phi''(0)} \\\\
	\bar{y}'(0)=0.
	\end{array} \right.
	\label{y1} \end{equation}\
	
	As above, we can determine $(\ref{y1})$ for a fixed value $\omega=\omega_0$ using numerical calculations to obtain an exact value of $\theta$. For $k_0=0.5$, one has 	
	$\omega_0=\omega(k_0)\approx 0.2642$, and consequently $\theta\approx -40.5143<0$. The fact that $\Lum$ is isoinertial in terms of $\omega$ gives us that $n(\Lum)=1$ and $\ker(\Lum)=[\phi']$ for all values $\omega$ in the parameter regime.
\end{proof}

\begin{proposition} \label{propIm} Operator $\Ldois$ in $(\ref{L2})$
	defined in $L^2(\mathbb{T})$ with domain
	$H^2(\mathbb{T})$ has zero as the first eigenvalue which is simple and the corresponding eigenfunction $\phi$. Moreover, the remainder of the spectrum is constituted by a discrete
	set of eigenvalues.
\end{proposition}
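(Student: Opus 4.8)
The plan is to argue exactly as in the solitary-wave case (Proposition \ref{specL2}), now invoking the Floquet/Oscillation machinery recalled in Theorem \ref{teo12} in place of Sturm--Liouville theory. The starting observation is that $\phi$ is an explicit, strictly positive $2\pi$-periodic solution of \eqref{soleq}, and differentiating \eqref{soleq} (or reading off the definition \eqref{L2}) shows that $\Ldois\phi=0$; hence $0$ is an eigenvalue of $\Ldois$ with eigenfunction $\phi$. Since $\phi>0$ on $\mathbb{T}$, the associated eigenfunction has \emph{no} zeros in $[0,2\pi)$, so by the Oscillation Theorem of \cite{magnus} (applied after dividing by the positive constant $1+\int_{\mathbb{T}}\phi'^2dx$, which only rescales the operator and does not move the sign or the nodal count of eigenvalues) this forces $0$ to be $\lambda_0$, i.e. the \emph{lowest} eigenvalue of $\Ldois$. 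In particular $\Ldois$ has no negative eigenvalue, and $0$ is necessarily simple because $\lambda_0$ is always simple for a Hill operator.

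Concretely I would proceed in the following order. First, normalize: set $c:=1+\int_{\mathbb{T}}\phi'^2dx>0$ and note $\Ldois=c\,(-\partial_x^2+\tfrac{\omega}{c}-\tfrac{\phi^{2r}}{c})$, so $\Ldois$ and the Hill operator $\mathcal{D}:=-\partial_x^2+\tfrac{\omega}{c}-\tfrac{\phi^{2r}}{c}$ have the same eigenfunctions and proportional (hence same-signed) eigenvalues; it therefore suffices to analyze $\mathcal{D}$. Second, observe $\mathcal{D}\phi=0$ with $\phi>0$ on $\mathbb{T}$, so $\phi$ is a nowhere-vanishing periodic eigenfunction. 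Third, invoke the Oscillation Theorem: an eigenfunction with $2n$ zeros in $[0,2\pi)$ belongs to $\lambda_{2n-1}$ or $\lambda_{2n}$; with $n=0$ this pins the eigenvalue $0$ to $\lambda_0$, the simple bottom of the spectrum. Fourth, conclude $n(\Ldois)=0$, $\ker(\Ldois)=[\phi]$, and by the same Weyl-criterion argument as in Proposition \ref{propRe} (citing \cite[Theorem B.48]{an}) the remainder of the spectrum is a discrete set of eigenvalues bounded away from $0$.

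The only genuinely delicate point is the claim that a strictly positive periodic solution is automatically the ground-state eigenfunction of the associated Hill operator; this is the standard fact that the lowest Floquet eigenvalue is the unique one admitting a positive eigenfunction, and it is exactly what the Oscillation Theorem of \cite{magnus} provides once we know the nodal count is zero. No numerics are needed here — in contrast with Proposition \ref{propRe}, where the position of $0$ in the spectrum of $\mathcal{L}_1$ had to be resolved via the constant $\theta$ — because positivity of $\phi$ already determines the outcome. Thus the proof is short: it is a one-line reduction to $\mathcal{D}$ followed by an application of the Oscillation Theorem, and I would expect the write-up to mirror almost verbatim the proof of Proposition \ref{specL2}, substituting "Floquet theory and the Oscillation Theorem" for "Sturm--Liouville theory."
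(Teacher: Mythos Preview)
Your proposal is correct and matches the paper's own argument essentially verbatim: the paper also simply notes that $\phi>0$ and $\Ldois\phi=0$, then invokes Floquet theory to conclude that $0$ is the first (simple) eigenvalue with eigenfunction $\phi$. Your write-up is in fact more detailed than the paper's two-line proof, but the underlying idea is identical.
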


\begin{proof}
	The proof is similar to the Proposition $\ref{specL2}$. In fact, since $\phi$ is positive and $\Ldois\phi=0$, we see directly from the Floquet Theory that $0$ is the first eigenvalue of $\Ldois$ and it is result to be simple.
\end{proof}

Gathering the results in the last two propositions, we finally obtain:
\begin{theorem}\label{specLper}
	The operator $\mathcal{L}$ in \eqref{L} defined on $\Ll^2$
	with domain $\Hh^2$ has a unique negative eigenvalue, which
	is simple. The eigenvalue zero is double with associated eigenfunctions
	$(\phi',0)$ and $(0,\phi)$. Moreover the essential spectrum is empty and the remainder of the spectrum is constituted by a discrete
	set of eigenvalues.
\end{theorem}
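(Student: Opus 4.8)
The plan is to read the statement off Propositions \ref{propRe} and \ref{propIm} by exploiting the block-diagonal structure of $\mathcal{L}$ in $(\ref{L})$. Since $\mathcal{L}=\mathrm{diag}(\Lum,\Ldois)$ acts on $\Ll^2=L^2(\mathbb{T})\times L^2(\mathbb{T})$ with domain $\Hh^2=H^2(\mathbb{T})\times H^2(\mathbb{T})$, a pair $(P,Q)\in\Hh^2$ is an eigenvector of $\mathcal{L}$ with eigenvalue $\lambda$ if and only if $\Lum P=\lambda P$ and $\Ldois Q=\lambda Q$. Hence $\sigma(\mathcal{L})=\sigma(\Lum)\cup\sigma(\Ldois)$ and, for every $\lambda\in\R$, one has $\dim\ker(\mathcal{L}-\lambda)=\dim\ker(\Lum-\lambda)+\dim\ker(\Ldois-\lambda)$ and $n(\mathcal{L})=n(\Lum)+n(\Ldois)$.

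First I would record that $\Lum$ and $\Ldois$ are self-adjoint operators on $L^2(\mathbb{T})$ with compact resolvent. Indeed, $\Ldois$ is a Hill operator, while $\Lum$ differs from one only by the term $-2(\phi',\partial_x\cdot)_{L^2}\phi''$, which after an integration by parts equals $2(\phi'',\cdot)_{L^2}\phi''$, a bounded symmetric rank-one operator on $L^2(\mathbb{T})$; it therefore affects neither self-adjointness nor the essential spectrum. Consequently both $\Lum$ and $\Ldois$ have purely discrete spectrum, so that $\sigma_{\mathrm{ess}}(\mathcal{L})=\sigma_{\mathrm{ess}}(\Lum)\cup\sigma_{\mathrm{ess}}(\Ldois)=\varnothing$ and $\sigma(\mathcal{L})$ is a discrete set of eigenvalues of finite multiplicity.

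Next I would count the non-positive eigenvalues. By Proposition \ref{propRe}, $\Lum$ has exactly one negative eigenvalue, which is simple, and $0$ is its second eigenvalue, also simple, with eigenfunction $\phi'$; by Proposition \ref{propIm}, $\Ldois$ has no negative eigenvalue and $0$ is its simple first eigenvalue, with eigenfunction $\phi$. Therefore $n(\mathcal{L})=n(\Lum)+n(\Ldois)=1$, and since this negative eigenvalue is simple for $\Lum$ and absent from $\Ldois$, it is simple for $\mathcal{L}$, with eigenfunction $(\chi,0)$ where $\chi$ denotes the ground state of $\Lum$. Moreover $\ker(\mathcal{L})=\big(\ker(\Lum)\times\{0\}\big)\oplus\big(\{0\}\times\ker(\Ldois)\big)=\mathrm{span}\{(\phi',0),(0,\phi)\}$, so $0$ is a double eigenvalue with the eigenfunctions $(\phi',0)$ and $(0,\phi)$, as claimed.

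I do not expect a genuine obstacle in this argument; the only delicate point is the mutual compatibility of the two propositions — that $\Ldois$ hides no negative eigenvalue below $0$ and that $\Lum$ has no second negative eigenvalue — but this is exactly what Propositions \ref{propRe} and \ref{propIm} assert, having been obtained through the Oscillation Theorem of \cite{magnus} and Theorem \ref{teo12}. Hence the remaining work reduces to assembling the block-diagonal information correctly.
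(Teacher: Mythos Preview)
Your proposal is correct and follows exactly the paper's approach: the authors' proof is the single sentence that the result ``is an immediate consequence of Propositions \ref{propRe} and \ref{propIm},'' and you have simply written out the block-diagonal bookkeeping that justifies this. Your additional observation that the rank-one term $2(\phi'',\cdot)_{L^2}\phi''$ leaves self-adjointness and the essential spectrum unchanged is a useful clarification that the paper leaves implicit.
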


\begin{proof}
	The proof of this result is an immediate consequence of Propositions $\ref{propRe}$ and $\ref{propIm}$.
\end{proof}

\section{Orbital stability - Proof of Theorem $\ref{maintheo}$}\label{stasec}

We are going to use \cite{gss1}, \cite{gss2} and \cite{np} (see also \cite{we}) in order to obtain the orbital stability by combining the results determined in subsection $\ref{reviewsec}$ with Theorems $\ref{specLline}$ and $\ref{specLper}$. More specifically, in subsection $\ref{reviewsec}$, we establish the existence of a smooth curve $\omega\in I\subset\mathbb{R}\mapsto\phi\in H^n(\mathbb{B})$, $n\in\mathbb{N}$ of standing waves. Theorems $\ref{specLline}$ and $\ref{specLper}$  give us that $n(\mathcal{L})=1$ and $\ker(\mathcal{L})=[(\phi',0),(0,\phi)]$ and both are sufficient conditions to determine the orbital stability for the cases $r=1,2$.\\
\indent Let us determine $\eta\in H^2(\mathbb{B})$ such that $\Lum\eta=\phi$. This fact is very useful to calculate  $\mathcal{I}=(\Lum\eta,\eta)_{L^2}<0$. In fact,  to obtain $\eta$, we derive equation $(\ref{soleq})$ with respect to $\omega$ to get
\begin{equation}\label{Leta}
-\left(1+\int_{\mathbb{B}}\phi'^2dx\right)
\eta''-2\left(\int_{\mathbb{B}}\phi'
\eta'\ dx\right)\phi''
+\omega\eta-
(2r+1)\phi^{2r}\eta=\phi,
\end{equation}
where $\eta=-\frac{d}{d\omega}\phi$. Thus $\Lum\eta=\phi$ and $\mathcal{I}$ becomes

\begin{equation}
\label{IderL2}
\mathcal{I}=(\Lum\eta,\eta)_{L^2}=-\frac{1}{2}\frac{d}{d\omega}\int_{\mathbb{B}}\phi^2dx.
\end{equation}
To obtain the orbital stability, it makes necessary to determine that $\frac{d}{d\omega}\int_{\mathbb{B}}\phi^2dx>0$ in each case. When $\frac{d}{d\omega}\int_{\mathbb{B}}\phi^2dx<0$, we can prove the orbital instability in the space $\mathbb{H}_e^1$. \\

\noindent {\bf Case 1.} $\mathbb{B}=\R$.\\
 According to $(\ref{IderL2})$, we need to analyze the behavior of $\int_{\mathbb{R}}\phi^2dx$. In fact, from $(\ref{solitary})$ we see that

\begin{equation}
\label{normL2}\int_{\mathbb{R}}\phi^2dx=\frac{a^2}{b}\int_{\mathbb{R}}{\rm sech}^{\frac{2}{r}}(x)dx=\frac{a^2}{b}M(r),
\end{equation}
where $M(r)$ does not depend on $\omega$. To analyze the sign of $\mathcal{I}$, we use the arguments in Subsection $\ref{reviewsec}$ by plotting $\frac{a^2}{b}$ in each case.

\noindent $\bullet$ $r=1$

  \begin{figure}[!htb]
  	 		\includegraphics[scale=0.35]{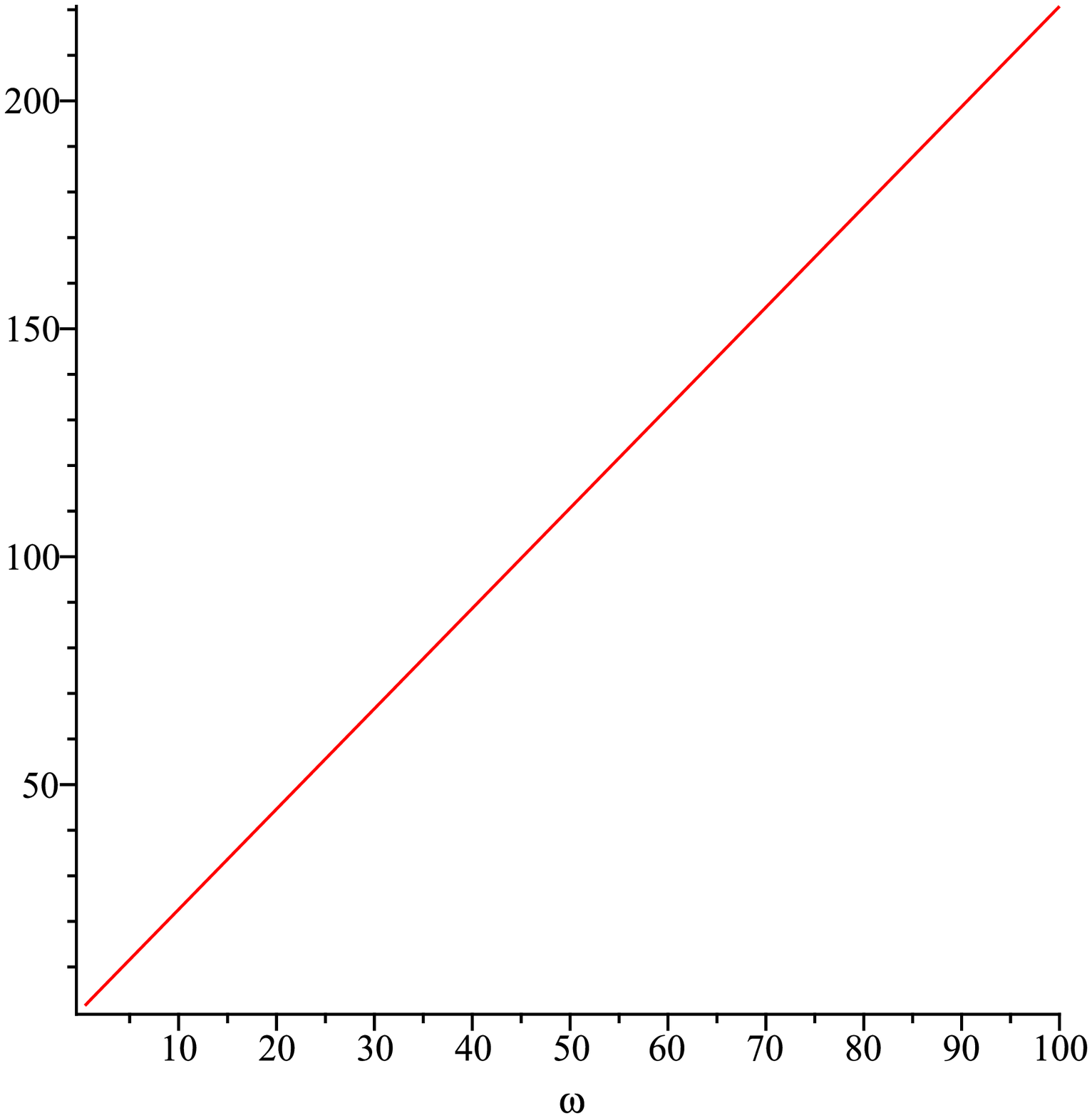}
  		  	\end{figure}

\noindent $\bullet$ $r=2$

\begin{figure}[!htb]
	\includegraphics[scale=0.35]{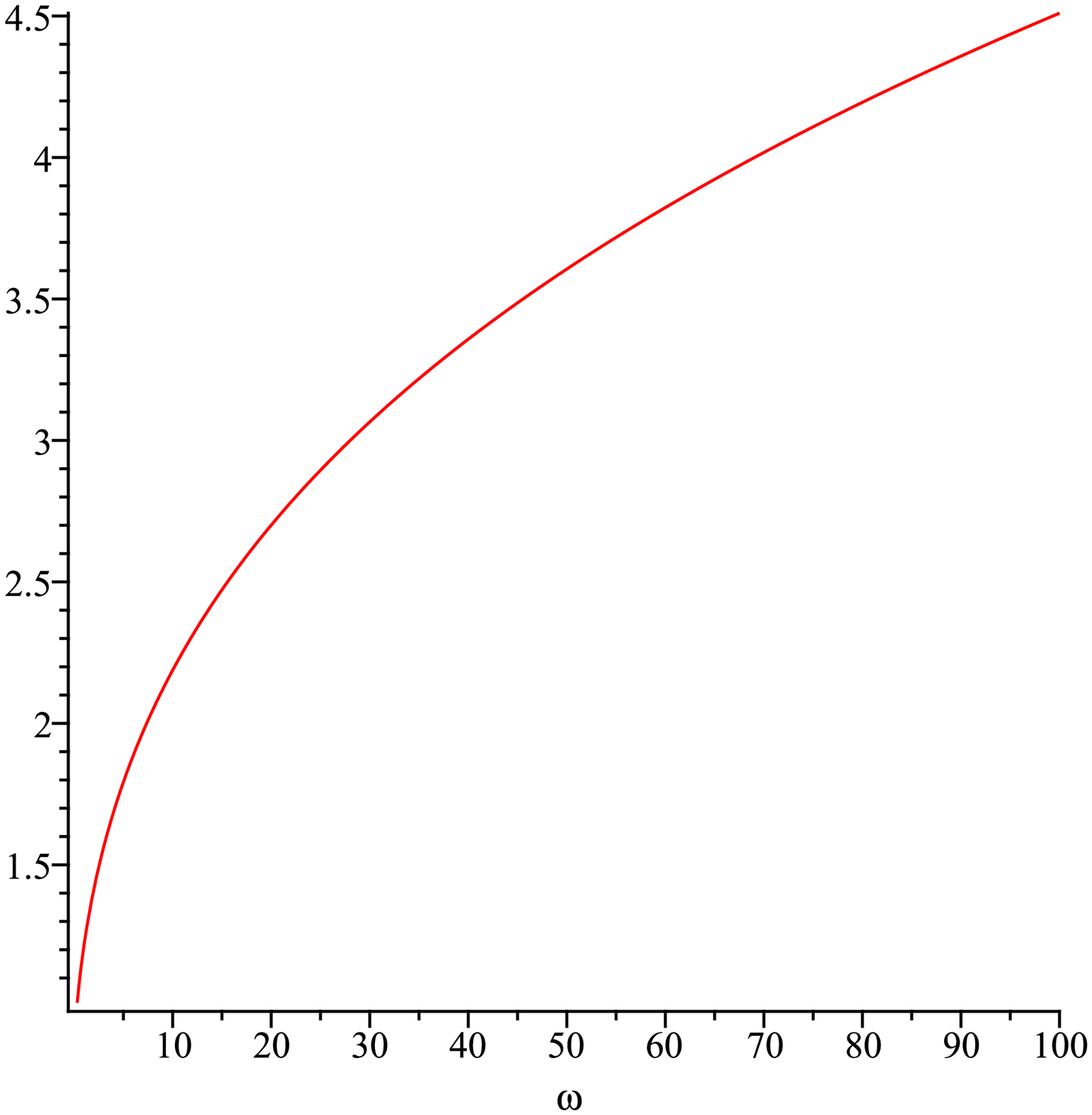}
\end{figure}

\newpage
\noindent $\bullet$ $r=4$

\begin{figure}[!htb]
	\includegraphics[scale=0.35]{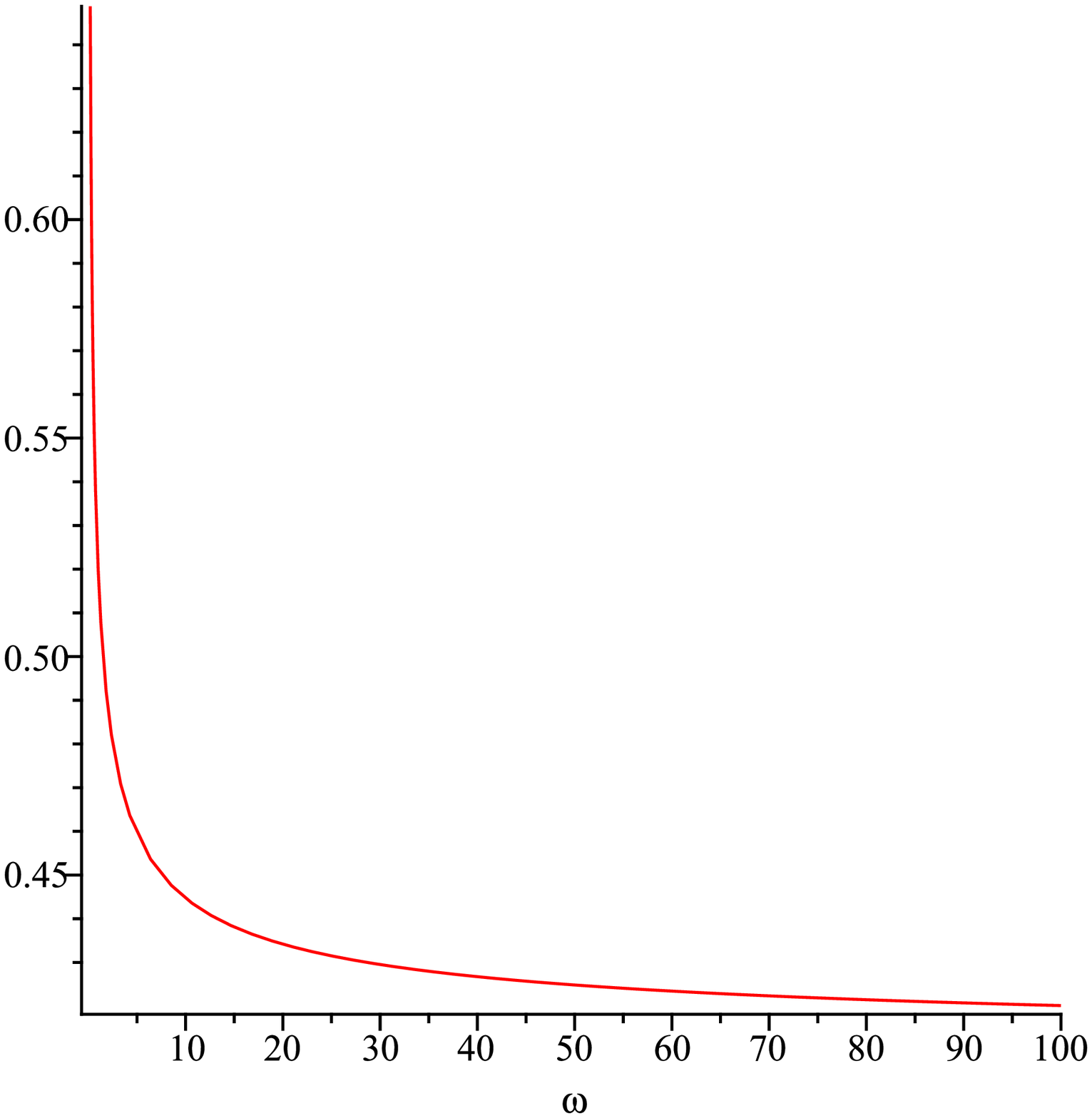}
\end{figure}

\indent Gathering the results above, we conclude that if $r\in\{1,2\}$ the solitary wave in $(\ref{solitary})$ is orbitally stable in $\Hh^1$. This fact can be precisely done by using Proposition $\ref{teo11}$, Theorem $\ref{specLline}$ and the behavior of $\mathcal{I}$ in terms of $r$ combined with the stability approaches in \cite{gss1}, \cite{gss2} and \cite{np}. For the case $r=4$, we need to use only \cite{gss1} applied to the space $\mathbb{H}_{e}^1$. Important to mention that in this last case, the Definition $\ref{stadef}$ must be considered only with the rotation symmetry since the translation symmetry is not invariant in the space $\mathbb{H}_{e}^1$. With this restriction one has $n(\mathcal{L})=1$ and $\ker(\mathcal{L})=[(0,\phi)]$ since the pair $(\phi',0)$ in the kernel of $\mathcal{L}$ defined in the whole space comes from the dropped symmetry. This fact finishes the proof of Theorem $\ref{maintheo}$ for the case $\mathbb{B}=\R$.\\

\begin{remark}
It is worth mentioning that in the general case, we are not able to determine any information about the orbital stability/instability. In fact, we have some difficulties to provide a satisfactory behavior of the quotient $\frac{a^2}{b}$ in terms of $\omega$ for these waves. Our intention is to give a positive answer for this question in a brief future.
\end{remark}
\noindent {\bf Case 2.} $\mathbb{B}=\mathbb{T}$.\\
\noindent $\bullet$ $r=1$. In this case, we can deduce from $(\ref{bk})$ that
\begin{equation}\label{normL2per}\displaystyle\int_{\mathbb{T}}\phi^2dx=\displaystyle\frac{2a^2E(k)}{b}=\frac{2\pi a^2E(k)}{K(k)},\end{equation} where $a$ is given by $(\ref{ak})$. In addition, from chain rule and the fact that $\frac{d\omega}{dk}>0$ (see Figure 8), we see that
$$\displaystyle\frac{d}{d\omega}\displaystyle\left(\displaystyle\int_{\mathbb{T}}\phi^2\ dx\right)=\displaystyle\frac{\displaystyle\frac{d}{dk}\int_{\mathbb{T}}\phi^2dx}{\frac{d\omega}{dk}}=\displaystyle\frac{\displaystyle\frac{d}{dk}\displaystyle\left(\frac{2\pi a^2 E(k)}{K(k)}\right)}{\frac{d\omega}{dk}}.$$

\indent Since $\frac{d\omega}{dk}>0$, we need to show that $\frac{d}{dk}\int_{\mathbb{T}}\phi^2dx>0$ for all $k\in (0,k_{*})$. Next picture shows us that $\int_{\mathbb{T}}\phi^2dx$ is strictly increasing for all these values:

\begin{figure}[!htb]
	\includegraphics[scale=0.35]{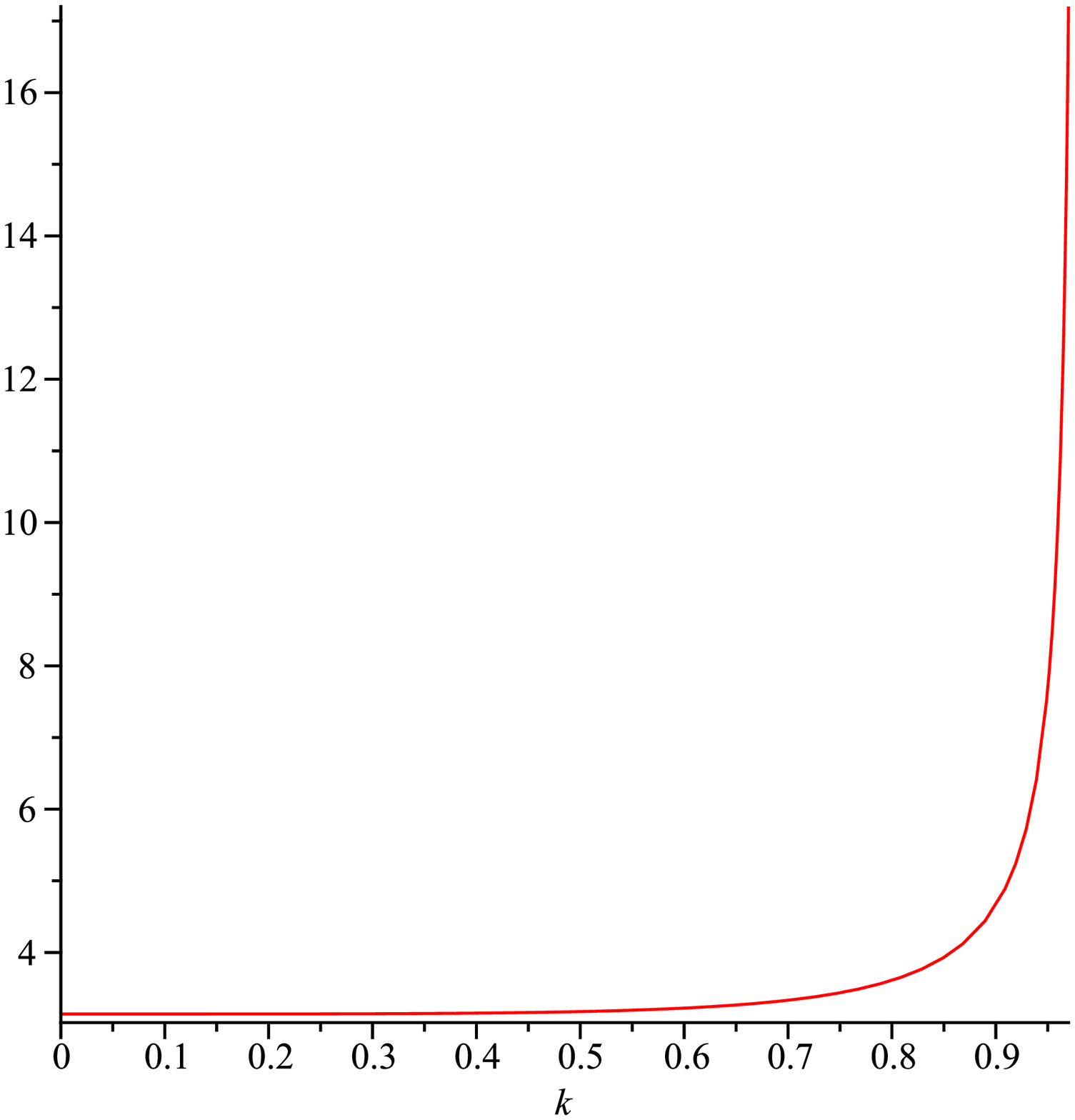}
\end{figure}

\noindent $\bullet$ $r=2$. In this situation, we can establish that
\begin{equation}\label{normL2per2}\displaystyle\int_{\mathbb{T}}\phi^2dx=\displaystyle\frac{2\pi a^2[k^2K(k)-k^2\Pi(\alpha,k)+\alpha\Pi(\alpha,k)]}{\alpha K(k)},\end{equation}
where $\Pi$ is the complete elliptic integral of the third kind (see \cite[Formula 110.08]{byrd}). As we have determined in the case $r=1$, we only need to prove that $\frac{d}{dk}\int_{\mathbb{T}}\phi^2dx>0$ (since Figure 10 gives us that $\frac{d\omega}{dk}>0$). We can plot the picture of $\int_{\mathbb{T}}\phi^2dx$ in terms of $k$ as:

\begin{figure}[!htb]
	\includegraphics[scale=0.35]{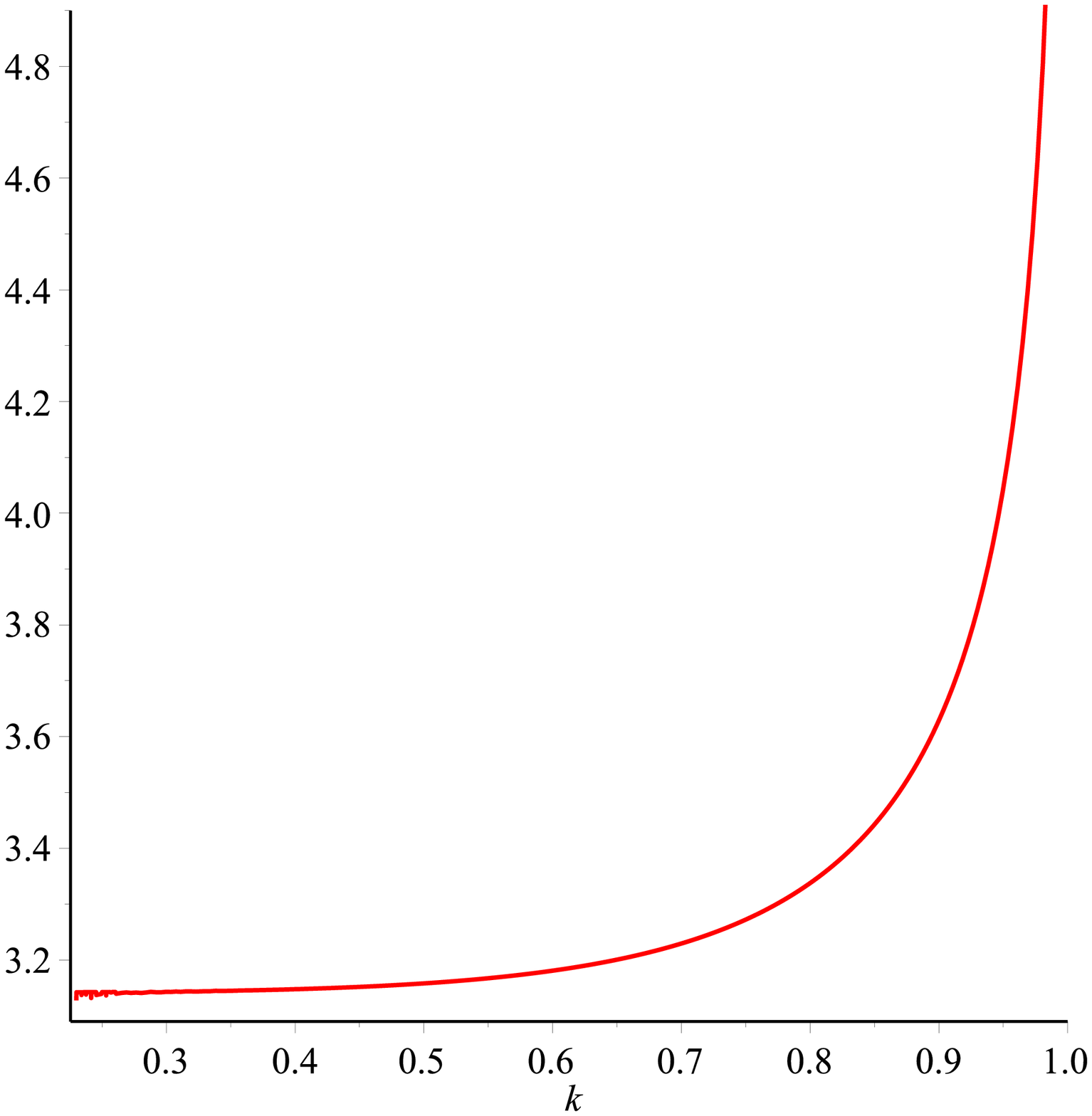}
\end{figure}

\indent Using the arguments in \cite{gss1}, \cite{gss2}, \cite{np}, we conclude that for the cases $r=1,2$, the periodic waves in $(\ref{per1})$ and $(\ref{per2})$ are orbitally stable in $\Hh^1$. This fact finishes the proof for the case $\mathbb{B}=\mathbb{T}$.
\section*{Acknowledgments}

F. Natali is partially supported by Funda\c c\~ao Arauc\'aria/Brazil (grant 002/2017) and CNPq/Brazil (grant 304240/2018-4).

\end{document}